\newenvironment{remark}[1][Remark]{\begin{trivlist}
\item[\hskip \labelsep {\bfseries #1}]}{\end{trivlist}}
\DeclareMathOperator{\rank}{rank}
\newcommand{\R}{{\mathbb R}}
\newcommand{\C}{{\mathbb C}}
\newcommand{\dr}{\mathrm{d}}
\newcommand{\dd}[2]{\frac{\dr#1}{\dr#2}}
\newcommand{\pdd}[2]{\frac{\partial #1}{\partial #2}}
\newcommand{\bv}[1]{\mathbf{#1}}
\newcommand{\bfu}{\mathbf{u}}
\newcommand{\bfv}{\mathbf{v}}
\newcommand{\bfw}{\mathbf{w}}
\newcommand{\bff}{\mathbf{f}}
\newcommand{\bfg}{\mathbf{g}}
\newcommand{\bfx}{\mathbf{x}}
\newcommand{\bfphi}{\boldsymbol{\varphi}}
\newcommand{\bfpsi}{\boldsymbol{\psi}}
\newcommand{\bip}[3]{\left\langle #1,#2 \right\rangle_{#3}}
\newcommand{\bnorm}[2]{\langle\mkern-3mu\langle #1 \rangle\mkern-3mu\rangle_{#2}}
\newcommand{\ip}[3]{\left( #1,#2 \right)_{#3}}
\newcommand{\Or}{\mathcal{O}}
\newcommand{\dO}{\partial \Omega}
\newcommand{\norm}[1]{\|#1\|}
\newcommand{\seminorm}[1]{{\left\vert\kern-0.25ex\left\vert\kern-0.25ex\left\vert #1 
    \right\vert\kern-0.25ex\right\vert\kern-0.25ex\right\vert}}
\theoremstyle{plain}
\newtheorem{theorem}{Theorem}
\theoremstyle{definition}
\newtheorem{defi}{Definition}
\theoremstyle{remark}
\newlength\figureheight
\newlength\figurewidth
\definecolor{light-gray}{gray}{0.8}
\title{Order Preserving Interpolation for Summation-by-Parts Operators at Non-Conforming Grid Interfaces}
\author{Martin Almquist\thanks{Department of Geophysics, Stanford University, Stanford, CA 94305, United States. \texttt{malmquist@stanford.edu} }  \and Siyang Wang\thanks{Department of Mathematical Sciences, Chalmers University of Technology and University of Gothenburg, SE-412 96 Gothenburg,  
Sweden. \texttt{siyang.wang@chalmers.se} } \and Jonatan Werpers\thanks{Department of Information Technology, Uppsala University, SE-751 05 Uppsala, Sweden. \texttt{jonatan.werpers@it.uu.se} } }
\date{}
\begin{document}
\maketitle

\begin{abstract}
  We study non-conforming grid interfaces for summation-by-parts finite difference methods applied to partial differential equations with second derivatives in space. To maintain energy stability, previous efforts have been forced to accept a reduction of the global convergence rate by one order, due to large truncation errors at the non-conforming interface. We avoid the order reduction by generalizing the interface treatment and introducing order preserving interpolation operators. We prove that, given two diagonal-norm summation-by-parts schemes, order preserving interpolation operators with the necessary properties are guaranteed to exist, regardless of the grid-point distributions along the interface. The new methods retain the stability and global accuracy properties of the underlying schemes for conforming interfaces. 
\end{abstract}


\section{Introduction}
Adaptive mesh refinement is essential for efficiency in any simulation that requires high resolution in a localized area. For wave-dominated phenomena, high-order finite difference (FD) methods are often computationally efficient, but not always robust. By combining summation-by-parts (SBP) operators \cite{KreissScherer74} with simultaneous approximation terms (SATs) \cite{CarpenterGottlieb94}, the SBP-SAT methodology leads to energy stable and conservative high-order FD methods on multi-block and curvilinear grids \cite{DelReyFernandez2014a,Svard2014}.

Mattsson and Carpenter \cite{MattssonCarpenter09} extended the SBP-SAT framework to locally refined grids by constructing SBP preserving interpolation operators for non-conforming grid interfaces. They proved energy stability for conservation laws and parabolic equations. The approach has since been extended to the Schrödinger equation \cite{Nissen2012a}, the second order wave equation \cite{Wang2016b}, and the advection-diffusion equation \cite{Lundquist2018}. For equations with second derivatives in space, it has been observed that the SATs at the non-conforming interfaces worsen the largest local truncation error, and hence the global convergence rate, by one order, as compared to conforming interfaces \cite{Lundquist2018,Nissen_15,Wang2017int}. The obvious remedy would have been to increase the order of accuracy of the interpolation operators, but \cite{Lundquist2015} showed that this is impossible because the order of the interpolation operators is bounded from above by the order of the quadrature rule associated with every SBP operator \cite{Hicken13}. Following this discovery, \cite{Friedrich2017} constructed interpolation operators corresponding to novel degree-preserving first derivative SBP operators that are based on extra-high order quadrature rules. Unlike traditional SBP FD operators, the boundary closures of the degree-preserving operators depend on the number of grid points. It is not obvious how well this approach would extend to equations with second derivatives and variable coefficients considering the significant effort involved in constructing such SBP operators \cite{Mattsson11}.

In this work we circumvent the order reduction for second order PDEs without increasing the quadrature order. Instead, we introduce new interpolation operators referred to as order-preserving (OP) interpolation operators. The key property of the OP operators is that they come in two pairs. While each pair suffers from the accuracy restriction derived in \cite{Lundquist2018,Lundquist2015}, it is possible to avoid accuracy reduction at the interface by using particular SATs. We prove a theoretical result, which states that given two diagonal-norm SBP operators, it is always possible to construct matching OP interpolation operators. Encouraged by this result, we construct OP operators for the special case of 2:1 grid refinement and perform numerical experiments. With these new interpolation operators, our experiments show the same global convergence rates as if the interfaces were conforming for the heat equation, the Schr\"{o}dinger equation, and the second order wave equation. 

SBP preserving interpolation operators are not only used for coupling FD grid blocks of equal sizes. In \cite{Kozdon2014}, they were used to couple FD and discontinuous Galerkin methods, and in \cite{Lundquist2018} it was shown that they are the key to coupling arbitrary SBP schemes on general meshes. In \cite{Nissen_15}, interpolation operators for so called T-junctions, i.e.\ grids that are non-conforming at the block level, were constructed. A recent study also coupled finite difference and finite element methods \cite{Gao2018}. Although we only present experiments with FD methods, the ideas in this paper apply to hybrid methods.

The numerical experiments in \cite{Nissen_15} showed quite erratic convergence behaviour when interpolation operators were used for the Schrödinger equation. Although the average rate was approximately as expected, the rate from one refinement level to the next varied noticeably. Numerical experiments in this paper show that erratic convergence behaviour for the Schrödinger equation can be mitigated by using the same discretization of the Laplacian as for the second order wave equation.

The paper is organized as follows. We introduce some notation in 
section \ref{sec:prel}. In section \ref{sec_schrodinger_heat} we introduce the OP operators for the Schrödinger and heat equations. In section \ref{sec:int_op} we prove that, given two SBP operators based on diagonal quadrature rules, OP interpolation operators always exist. Using OP operators, we derive an order-preserving coupling for the second order wave equation in section \ref{sec:wave}. We then discuss three different discretizations of the Laplacian and their properties in section \ref{sec_Laplacian}. In section \ref{sec:num_exp}, we present numerical experiments with the Schrödinger, heat, and second order wave equations. The OP coupling exhibits one order higher global convergence compared with previous approaches using the interpolation operators derived in \cite{MattssonCarpenter09}. We conclude in section \ref{sec:conclusion}.

\section{Preliminaries} \label{sec:prel}
In this section we introduce some notation and recall the properties of second derivative SBP operators that will be needed in subsequent sections. 

Let $\Omega$ denote a bounded domain in $\R^2$ and let $f = [f_1, \ldots, f_m]^T$ and $g = g_1, \ldots, g_m]^T$ be vector-valued functions with $m$ components in $C^2(\Omega)$. We will use the standard inner product and norm on $L^2(\Omega)$, i.e.
\begin{equation} \label{eq:inner_prod_continuous}
  \ip{f}{g}{\Omega} = \int_{\Omega} \! f^*g \, \mathrm{d} \Omega, \quad \norm{f}^2_{\Omega} = \ip{f}{f}{\Omega} ,
\end{equation}
where $^*$ denotes conjugate transpose. For integration along the boundary $\partial \Omega$, we use the notation
\begin{equation}
  \bip{f}{g}{\partial \Omega} = \int_{\partial \Omega} \! f^* g \, \mathrm{d} \Gamma .
\end{equation}
We will frequently use the normal derivative on $\partial \Omega$, defined by 
\begin{equation}
\pdd{f_i}{\hat{n}} = \nabla f_i \cdot \hat{n} ,
\end{equation}
where $\hat{n}$ denotes the outward unit normal. With this notation, Green's first identity reads
\begin{equation} \label{eq:green_cont}
  \ip{f_i}{\Delta g_i}{\Omega} = \bip{f_i}{\pdd{g_i}{\hat{n}}}{\dO} - \ip{\nabla f_i}{\nabla g_i}{\Omega} .
\end{equation}

Let $U = [-1,0]\times[0,1]$ and $V = [0,1]\times[0,1]$. We denote the interface between $U$ and $V$ by $\Gamma$.  The outer boundaries are $\Gamma_U = \partial U \setminus \Gamma$ and $\Gamma_V = \partial V \setminus \Gamma$. Throughout this paper we will consider initial-boundary value problems in the form
\begin{equation} \label{eq:general}
\begin{array}{rll}
\smallskip
L_U u = 0, &(x,y) \in U, & t \in [0, \, T], \\
\smallskip
L_V v = 0,  &(x,y) \in V, & t \in [0, \, T], \\
\smallskip
u - v = 0,  &(x,y) \in \Gamma, & t \in [0, \, T], \\
\smallskip
\alpha_U \pdd{u}{\hat{n}_U} + \alpha_V \pdd{v}{\hat{n}_V}=0,  &(x,y) \in \Gamma, & t \in [0, \, T], \\
\smallskip
B_U u = 0,  &(x,y) \in \Gamma_U, & t \in [0, \, T], \\
\smallskip
B_V v = 0,  &(x,y) \in \Gamma_V, & t \in [0, \, T], \\
\end{array}
\end{equation}
augmented with suitable initial conditions at time $t=0$. Here $L_{U,V}$ are linear differential operators of second order in space and first or second order in time; $\hat{n}_{U,V}$ are outward unit normals on the interface $\Gamma$; $\alpha_{U,V}$ are scalar coefficients; and $B_{U,V}$ are boundary operators. Because our focus is on the interface treatment, we assume that the boundary operators $B_{U,V}$ are such that the problems are well-posed and will henceforth omit them in the analysis.

\subsection{Summation-by-parts finite difference operators}
Consider an interval $J = [x_{\ell}, \, x_{r}]$ and a grid vector $\mathbf{x} = [x_1, \ldots, x_N]^T$ that discretizes $J$. Let $F,G \in C^{\infty}(J)$ and let $F({\mathbf{x})} = [F(x_1),\ldots,F(x_N)]^T$ denote the restriction of $F$ to $\mathbf{x}$. Consider a difference operator $D_2$ that approximates the second derivative, i.e.
\begin{equation}
    D_2 F(\mathbf{x}) \simeq F''(\mathbf{x}).
\end{equation}
We say that $D_2$ has the summation-by-parts (SBP) property \cite{MattssonNordstrom04} if it can be decomposed as 
\begin{equation}
    D_2 = H^{-1}(-\widetilde D^T H \widetilde D + e_{r}d_{r}^T - e_{\ell}d_{\ell}^T ),
\end{equation}
where $H=H^T > 0$; $e_{\ell,r}^T$ are row vectors that interpolate to the left and right boundaries of the domain; $d_{\ell,r}^T$ are row vectors that approximate the first derivatives at the domain boundaries; and $F(\bv{x})^T\widetilde D^T H \widetilde D G(\bv{x})$ approximates $\int_J F'G' \, \mathrm{d}x$. Note that although the operator $\widetilde D$ is accurate in the integrated sense, $\widetilde{D}$ by itself is not necessarily an accurate approximation of the first derivative. We further require that
\begin{equation} \label{eq:borrowing}
    \widetilde D^T H \widetilde D = h \gamma (d_r d_r^T + d_{\ell} d_{\ell}^T) + \widetilde{M},
\end{equation}
where $\gamma$ is a positive constant and $\widetilde{M}$ is symmetric positive semidefinite. The property in \eqref{eq:borrowing} is essential for the inter-block coupling in the second order wave equation \cite{MattssonHam08}.

Let $\Omega$ denote the unit square. We assume for ease of notation that $\Omega$ is discretized by a tensor-product grid with $N$ grid points in each coordinate direction. Let $D_2$ denote an SBP operator corresponding to a single grid line and let $I$ denote the $N \times N$ identity matrix. On $\Omega$, we will use the following operators:
\begin{align}
    D_{xx} &= D_2 \otimes I,                       & D_{yy} &= I \otimes D_2 , \\
    \widetilde D_x &= \widetilde D \otimes I,      & \widetilde D_y &= I \otimes \widetilde D  \\
    D_{\Delta} &= D_{xx} + D_{yy} ,                & H_{\Omega} &= H \otimes H \\
    \widetilde{M}_x &= \widetilde{M} \otimes H,    & \widetilde{M}_y &= H \otimes \widetilde{M} \\
    e_{W,E} &= e_{\ell,r} \otimes I,               &  e_{S,N} &= I \otimes e_{\ell,r} , \\
    d_{W} &= -d_{\ell} \otimes I,                  &  d_{S} &= -I \otimes d_{\ell} , \\
    d_{E} &= d_{r} \otimes I,                      &  d_{N} &= I \otimes d_{r}. \\
\end{align}
Note the minus signs in the definitions of $d_{W,S}$, which ensure that all boundary derivative operators $d_{W,S,E,N}$ approximate the outward normal derivative.

Let $\mathbf{f}$ and $\mathbf{g}$ denote the restrictions of $m$-valued functions $f=[f_1,\ldots,f_m]^T$ and $g=[g_1,\ldots,g_m]^T$ to the grid. We store the discrete values in $N^2 \times m$ matrices,
\begin{equation}
  \bff = \begin{bmatrix} \bff_1 & \bff_2 & \ldots & \bff_m \end{bmatrix}, \quad \bfg = \begin{bmatrix} \bfg_1 & \bfg_2 & \ldots & \bfg_m \end{bmatrix},
 \end{equation}
 where $\bff_i$ denotes the restriction of $f_i$ to the grid. We define the operator $\widetilde D_{\nabla}$ by
\begin{equation}
  \widetilde D_{\nabla} \bff_i = \begin{bmatrix} \widetilde D_{x} \bff_i & \widetilde D_{y} \bff_i  \end{bmatrix}.
\end{equation}
  We further define a discrete inner product in $\C^{N^2 \times m}$, and a corresponding norm,
 \begin{equation} \label{eq:inner_prod_discrete}
  (\bff, \bfg) = \sum \limits_{i=1}^m \bff_i^* H_{\Omega} \bfg_i , \quad \norm{\bff}^2 = (\bff,\bff) .
 \end{equation}
Notice that \eqref{eq:inner_prod_discrete} approximates \eqref{eq:inner_prod_continuous}. For boundary integrals we use the notation
\begin{equation} \label{eq:discr_boundary_integral}
  \bip{e_W^T \bff_i}{e_W^T \bfg_i}{ } = (e_W^T \bff_i)^* H (e_W^T \bfg_i) ,
\end{equation}
and
\begin{equation}
  \bnorm{e_W^T \bff_i}{ }^2 = \bip{e_W^T \bff_i}{e_W^T \bff_i}{ } .
\end{equation}
Because of the SBP properties, the discrete operators satisfy a discrete version of Green's identity \eqref{eq:green_cont},
\begin{equation} \label{eq:ibp_discr_2D}
\begin{aligned}
  \ip{\bff_i}{D_{\Delta} \bfg_i}{} &=
  -\ip{\widetilde D_{\nabla} \bff_i}{\widetilde D_{\nabla} \bfg_i}{}
  +\sum \limits_{\alpha \in \{ W,S,E,N \} } \bip{e_{\alpha}^T \bff_i}{d_{\alpha}^T \bfg_i}{}. \\
  &
\end{aligned}
\end{equation}
Because of the property in \eqref{eq:borrowing} we have
\begin{equation} \label{eq:borrowing_2D}
  \ip{\widetilde D_{\nabla} \bff_i}{\widetilde D_{\nabla} \bfg_i}{} = \bff_i^* (\widetilde{M}_x + \widetilde{M}_y) \bfg_i + \sum \limits_{\alpha \in \{ W,S,E,N \} } h \gamma \bip{d_{\alpha}^T \bff_i}{d_{\alpha}^T \bfg_i}{},
\end{equation}
where $\widetilde{M}_{x,y}$ are symmetric positive semidefinite.

\subsection{Notation for two-block discretizations}
Consider the two-block problem \eqref{eq:general}.
Let $\bfu$ and $\bfv$ approximate $u$ and $v$, respectively. In each grid block we introduce SBP finite difference operators in the way described in the previous section. The operators could be based on different numbers of grid points or different orders of accuracy and are hence not identical, in general. For notational convenience however, we will not use different symbols for the differential operators, e.g.\ $D_{\Delta}$. But in this setting it is important to distinguish between the different quadratures that are used in the two blocks. Let $H_U^{\Gamma}$ denote the one-dimensional quadrature matrix $H$ used in $U$ and let $H_V^{\Gamma}$ denote the quadrature matrix in $V$. The corresponding two-dimensional quadrature matrices are
\begin{equation}
  H_U = H_U^{\Gamma} \otimes H_U^{\Gamma}, \quad H_V = H_V^{\Gamma} \otimes H_V^{\Gamma}.
\end{equation}
We will write $\ip{\cdot}{\cdot}{U}$ for the inner product used in $U$ and $\ip{\cdot}{\cdot}{V}$ for the inner product in $V$. Similarly, the boundary integrals corresponding to \eqref{eq:discr_boundary_integral} will be denoted by $\bip{\cdot}{\cdot}{U}$ and $\bip{\cdot}{\cdot}{V}$. Using the summation-by-parts fomula \eqref{eq:ibp_discr_2D} for $U$ and $V$ we can write
\begin{equation} \label{eq:sbp_property_2d_u}
  \ip{\bfu}{D_{\Delta} \boldsymbol{\phi}}{U} = \bip{e_E^T \bfu}{d_E^T \boldsymbol{\phi}}{U} - \ip{\widetilde D_{\nabla} \bfu}{\widetilde D_{\nabla} \boldsymbol{\phi}}{U},
\end{equation}
and
\begin{equation} \label{eq:sbp_property_2d_v}
  \ip{\bfv}{D_{\Delta} \boldsymbol{\psi}}{V} = \bip{e_W^T \bfv}{d_W^T \boldsymbol{\psi}}{V} - \ip{\widetilde D_{\nabla} \bfv}{\widetilde D_{\nabla} \boldsymbol{\psi}}{V}.
\end{equation}
Here, we have ignored all boundary terms not belonging to the interface between $U$ and $V$ since they do not enter the stability analysis of the interface treatment. They should be taken care of by proper enforcement of well-posed boundary conditions.

Similarly, the property in \eqref{eq:borrowing_2D} leads to
\begin{equation} \label{eq:borrowing_u}
  \ip{\widetilde D_{\nabla} \bfu}{\widetilde D_{\nabla} \bfphi}{U} = h_u \gamma_u \bip{d_E^T \bfu}{d_E^T \bfphi}{U} + \bfu^* \left( \widetilde{M}_x + \widetilde{M}_y \right) \bfphi ,
\end{equation}
and
\begin{equation} \label{eq:borrowing_v}
  \ip{\widetilde D_{\nabla} \bfv}{\widetilde D_{\nabla} \bfpsi}{V} = h_v \gamma_v \bip{d_W^T \bfv}{d_W^T \bfpsi}{V} + \bfv^* \left( \widetilde{M}_x + \widetilde{M}_y \right) \bfpsi ,
\end{equation}
where $h_{u,v}$ denote the grid spacings and the constants $\gamma_{u,v}$ correspond to the (possibly different) SBP operators used in $U$ and $V$.

\section{Parabolic and Schrödinger type equations}\label{sec_schrodinger_heat}

In this section we consider initial-boundary value problems of the form
\begin{equation} \label{eq:SE_heat_2D_continuous}
\begin{array}{rll}
\smallskip
u_t - a \Delta u = 0, & (x,y) \in U, & t \in [0, \,T], \\
\smallskip
v_t - b \Delta v = 0, & (x,y) \in V, & t \in [0, \,T], \\
\smallskip
u -  v = 0, & (x,y) \in \Gamma, & t \in [0, \,T], \\
\smallskip
a\pdd{u}{\hat{n}_U} + b\pdd{v}{\hat{n}_V} = 0, & (x,y) \in \Gamma, & t \in [0,\, T] , \\
\end{array}
\end{equation}
with initial data for $u$ and $v$. We assume that $a$ and $b$ are constant coefficients with non-negative real parts. If $a$ and $b$ are real and positive, \eqref{eq:SE_heat_2D_continuous} is a parabolic problem. Purely imaginary $a$ and $b$ yield a Schrödinger-type problem. 

Before discretizing \eqref{eq:SE_heat_2D_continuous}, we derive an energy estimate for the continuous problem. Multiplying the first equation in \eqref{eq:SE_heat_2D_continuous} by $u^*$ and integrating over $U$ yields
\begin{equation}
(u,u_t)_{U} = a(u,\Delta u)_{U} = a \bip{u}{\pdd{u}{\hat{n}_U}}{\partial U} - a \norm{\nabla u}^2_{U} = a \bip{u}{\pdd{u}{\hat{n}_U}}{\Gamma} - a \norm{\nabla u}^2_{U}, 
\end{equation}
where we discarded terms related to outer boundaries in the last step. By repeating the procedure on $V$ and using the interface conditions, we obtain the estimate
\begin{equation}
    \begin{aligned}
      \dd{}{t} \left( \norm{u}^2_{U} + \norm{v}^2_{V} \right)
      =&- (a+a^*) \norm{\nabla u}^2_{U} - (b+b^*) \norm{\nabla v}^2_{V}.
    \end{aligned}
\end{equation} 

\subsection{Semi-discrete approximation}
The semi-discrete approximation of \eqref{eq:SE_heat_2D_continuous} can be written as
\begin{equation} \label{eq:scheme_schr_heat}
\begin{aligned}
\bfu_t - a D_{\Delta}\bfu &=  SAT_u, \\
\bfv_t - b D_{\Delta}\bfv &=  SAT_v, \\
\end{aligned}
\end{equation}
where $SAT_{u,v}$ are penalty terms that weakly impose the interface conditions on $\Gamma$. For notational convenience we write e.g.\ $\bfu_t$ for $\dd{\bfu}{t}$. 
We make the ansatz
\small
\begin{equation}
\begin{aligned}
  SAT_u &= \tau_u a^* H_U^{-1} d_E H_U^{\Gamma} (e_E^T \bfu - I_{v2u}^{e} e_W^T \bfv) + \sigma_u H_U^{-1} e_E H_U^{\Gamma} (ad_E^T \bfu + I_{v2u}^{d} b d_W^T \bfv), \\
  SAT_v &= \tau_v b^* H_V^{-1} d_W H_V^{\Gamma} (e_W^T \bfv - I_{u2v}^{e} e_E^T \bfu) + \sigma_v H_V^{-1} e_W H_V^{\Gamma} (bd_W^T \bfv + I_{u2v}^{d} a d_E^T \bfu), \\
\end{aligned}
\end{equation}
\normalsize
where $\tau_{u,v}$ and $\sigma_{u,v}$ are scalar penalty parameters. The interpolation operators $I_{v2u}^{e,d}$ and $I_{u2v}^{e,d}$ interpolate between the two different grids that discretize the interface $\Gamma$. The superscripts $e$ and $d$ specify what the interpolation operator is applied to; either the solution itself ($e$), or the normal derivative ($d$). In the case of matching grids (and matching quadratures), all the interpolation operators can be replaced by identity matrices. Note that the above ansatz is more general than those used for the Schrödinger equation in \cite{Nissen2012a} and parabolic equations in \cite{MattssonCarpenter09}. The extra generality is what will enable us to obtain higher order of accuracy. The above ansatz reduces to the ones in \cite{MattssonCarpenter09,Nissen2012a} if we set
\begin{equation}
  I_{v2u}^{e} = I_{v2u}^{d}, \quad I_{u2v}^{e} = I_{u2v}^{d} .
\end{equation}
In the following subsection we analyze the local truncation errors introduced by $SAT_{u,v}$ in \eqref{eq:scheme_schr_heat}, to determine what accuracies we require of the different interpolation operators. After that, we derive stability conditions on the interpolation operators.

\subsection{Local truncation errors and convergence rates}
Let $\bv{x}_u$ and $\bv{x}_v$ be two different grid vectors that discretize an interval $J$. Let $f$ be a smooth function and let $\bff_{u}$ and $\bff_{v}$ denote its restrictions to $\bfx_u$ and $\bfx_v$, respectively. If $I_{u2v}$ and $I_{v2u}$ are interpolation operators of orders $q_{u2v}$ and $q_{v2u}$, then
\begin{equation}\label{int_err}
\begin{aligned}
I_{u2v} \bff_{u} &= \bff_{v} + \Or(h^{q_{u2v}}) , \\
I_{v2u} \bff_{v} &= \bff_{u} + \Or(h^{q_{v2u}}) ,
\end{aligned}
\end{equation}
where the interpolation error in \eqref{int_err} denotes the maximum error over all grid points. 


Traditional diagonal-norm SBP operators \cite{Mattsson11,MattssonNordstrom04} with 2$p$th order of accuracy in the interior are of order $p$ at a fixed number of near-boundary grid points. In two dimensions, all grid points near an interface or boundary are affected by the $p$th order errors. Still, for equations with second derivatives in space, numerical experiments often show $\min(2p,p+2)$th order convergence rates. For one dimensional problems, a general normal mode analysis shows that the convergence rate of pointwise stable schemes is at least $\min(2p,p+2)$ \cite{SvardNordstrom06}. A detailed analysis of a class of SBP--SAT discretizations for the second order wave equation proves that, with properly chosen penalty parameter, the convergence rate sometimes exceeds $\min(2p,p+2)$ \cite{Wang2017}. This result is extended to two dimensional problems in \cite{Wang2018}. However, when conforming grid interfaces are present the rate does not exceed $\min(2p,p+2)$, in general. We will refer to $\min(2p,p+2)$ as the ideal rate that we hope to preserve when using interpolation operators at non-conforming interfaces.  

The boundary derivative operators $d_{W,E,S,N}^T$ are typically constructed to be of order $p+1$ to ensure that SATs at boundaries and conforming interfaces do not cause truncation errors larger than $p$th order. Hence, we can ignore their effect in this discussion. To analyze the local truncation errors of the SATs, we let $\bfw_{u,v}$ denote the restrictions of the exact solution to the grids on the left and right sides of the interface. Assume that e.g.\ $I_{u2v}^e$ is of order $q_{u2v}^e$. The truncation errors are
\begin{align}
  T_u =\ & \tau_u a^* \underbrace{H_U^{-1} d_E H_U^{\Gamma}}_{\sim h^{-2}} \underbrace{(e_E^T \bfw_u - I_{v2u}^{e} e_W^T \bfw_v)}_{\sim h^{q_{v2u}^e}} \\ 
  &+ \sigma_u \underbrace{H_U^{-1} e_E H_U^{\Gamma}}_{\sim h^{-1}} \underbrace{(a d_E^T \bfw_u + I_{v2u}^{d} b d_W^T \bfw_v)}_{\sim h^{q_{v2u}^d}} \\
  =\ & \Or(h^{q_{v2u}^e - 2}) + \Or(h^{q_{v2u}^d-1}),
\end{align}
and
\begin{equation}
\begin{aligned}
  T_v =\ & \tau_v b^* \underbrace{H_V^{-1} d_W H_V^{\Gamma}}_{\sim h^{-2}} \underbrace{(e_W^T \bfw_v - I_{u2v}^{e} e_E^T \bfw_u)}_{\sim h^{q_{u2v}^e }} \\
   &+ \sigma_v \underbrace{H_V^{-1} e_W H_V^{\Gamma}}_{\sim h^{-1}} \underbrace{(b d_W^T \bfw_v + I_{u2v}^{d} a d_E^T \bfw_u)}_{\sim h^{q_{v2u}^d }} \\
  =\ & \Or(h^{q_{u2v}^e - 2}) + \Or(h^{q_{u2v}^d-1}) .
\end{aligned}
\end{equation}
Note that to balance the errors from the different interpolation operators we require
\begin{equation}
  q_{u2v}^e = q_{v2u}^e = q_{u2v}^d + 1 = q_{v2u}^d + 1 .  
\end{equation}
That is, $I_{u2v}^e$ and $I_{v2u}^e$ should ideally be one order more accurate than $I_{u2v}^d$ and $I_{v2u}^d$. 

Suppose that diagonal-norm SBP operators of interior order $2p$ are used on both sides of the interface. The inner product matrices of traditional SBP finite difference operators of interior accuracy $2p$ correspond to quadrature rules of order $2p$ \cite{Hicken13}. As will be discussed in more detail in section \ref{sec:int_op}, the order of the inner product matrix is what limits the accuracy of the interpolation operators. The highest achievable accuracy turns out to be
\begin{equation}
  q_{u2v}^e = q_{v2u}^e = p+1, \quad q_{u2v}^d = q_{v2u}^d = p,
\end{equation}
which leads to 
\begin{equation}
  T_{u,v} = \Or(h^{p-1}) .
\end{equation}
Recall that the diagonal-norm SBP derivative operators have accuracy $p$ at all grid points along the interface. Ideally we would have wanted $T_{u,v} = \Or(h^p)$, but since that is impossible, the best we can do is to construct interpolation operators such that $T_{u,v} = \Or(h^{p-1})$ only at $\Or(1)$ grid points along the interface. It is not obvious how the global convergence rate will be affected by a localized large truncation error. Because the error is more localized than that of the derivative operators, one would expect at least $(p+1)$th order global convergence, but could hope for higher order. Indeed, we observe $(p+2)$th order global convergence for $p=2,3$ in the numerical experiments in section \ref{sec:num_exp}.

In the less general interface coupling with
\begin{equation}
  I_{u2v}^e = I_{u2v}^d = I_{u2v} , \quad I_{v2u}^e = I_{v2u}^d = I_{v2u} ,
\end{equation}
we obtain local truncation errors
\begin{equation}
  T_u = \Or(h^{q_{v2u}-2}) , \quad T_v = \Or(h^{q_{u2v}-2}) .
\end{equation}
The stability requirements on the interpolation operators limit the accuracies according to (see \cite{Lundquist2018})
\begin{equation}
  q_{u2v} + q_{v2u} \leq 2p+1 .
\end{equation}
In this case, it is inevitable that $\max(T_u,T_v) = \Or(h^{p-2})$. Hence, with the new OP coupling, the largest local truncation error is of one order higher than the largest local truncation error in previous approaches. Therefore, it is reasonable to expect an improvement by one order in global convergence rate.

\subsection{Stability}

The aim in this subsection is to derive stability conditions on the interpolation operators. We first introduce the notation
\begin{equation}
  I_{u2v}^g := I_{u2v}^e, \quad I_{u2v}^b := I_{u2v}^d , \quad I_{v2u}^g := I_{v2u}^e, \quad I_{v2u}^b := I_{v2u}^d ,  
\end{equation}
where the superscripts $g$ and $b$ denote ``good'' and ``bad''. The error analysis in the previous subsection showed that all the SATs in \eqref{eq:scheme_schr_heat} give truncation errors of equal order if the good interpolation operators are one order more accurate than the bad ones.

To analyze stability we multiply the first and second equations in \eqref{eq:scheme_schr_heat} by $\bfu^* H_U$ and $\bfv^* H_V$, respectively,  which leads to
\begin{equation}
\begin{aligned}
  \ip{\bfu}{\bfu_t}{U} &= a \ip{\bfu}{D_{\Delta} \bfu }{U} + \ip{\bfu}{SAT_u}{U} , \\
  \ip{\bfv}{\bfv_t}{V} &= b \ip{\bfv}{D_{\Delta} \bfv }{V} + \ip{\bfv}{SAT_v}{V} .
\end{aligned}
\end{equation}
Using the summation-by-parts formulas \eqref{eq:sbp_property_2d_u} and \eqref{eq:sbp_property_2d_v}, we obtain
\begin{equation}
\begin{aligned}
  a \ip{\bfu}{D_{\Delta} \bfu }{U} &= a\bip{e_E^T \bfu}{d_E^T \bfu }{U} - a \ip{\widetilde D_{\nabla} \bfu}{\widetilde D_{\nabla} \bfu }{U} \\
  &= a\bip{e_E^T \bfu}{d_E^T \bfu }{U} - a \norm{\widetilde D_{\nabla} \bfu}^2_{U} ,
\end{aligned}
\end{equation}
and similarly
\begin{equation}
  b \ip{\bfv}{D_{\Delta} \bfv }{V} = b \bip{e_W^T \bfv}{d_W^T \bfv }{V} - b \norm{\widetilde D_{\nabla} \bfv}^2_{V} .
\end{equation}
The SATs yield
\begin{equation}
(\bfu,SAT_u)_{U} = \tau_u a^* \langle d_E^T \bfu, e_E^T \bfu - I_{v2u}^{g} e_W^T \bfv \rangle_{U} + \sigma_u \langle e_E^T \bfu, a d_E^T \bfu + I_{v2u}^{b} b d_W^T \bfv \rangle_{U} ,
\end{equation}
\begin{equation}
(\bfv,SAT_v)_{V} = \tau_v b^* \langle d_W^T \bfv, e_W^T \bfv - I_{u2v}^{g} e_E^T \bfu \rangle_{V} + \sigma_v \langle e_W^T \bfv, b d_W^T \bfv + I_{u2v}^{b} a d_E^T \bfu \rangle_{V} .
\end{equation}
We can now conclude that the discrete energy rate is
\begin{equation}
\begin{aligned}
\dd{}{t} \left( \norm{\bfu}^2_{U} + \norm{\bfv}^2_{V} \right) = - (a+a^*) \norm{\widetilde D_{\nabla} \bfu}_U^2 - (b+b^*) \norm{\widetilde D_{\nabla} \bfv}_V^2 + w^* A w,
\end{aligned}
\end{equation}
where we have defined
\begin{equation}
  w = \begin{bmatrix} 
e_E^T \bfu \\
e_W^T \bfv \\
d_E^T \bfu \\
d_W^T \bfv \\
\end{bmatrix}
, \quad
A = \begin{bmatrix}
& & \alpha_{13} & \alpha_{14} \\
& & \alpha_{23} & \alpha_{24} \\
\alpha_{13}^* & \alpha_{23}^* & & \\
\alpha_{14}^* & \alpha_{24}^* & & \\
\end{bmatrix},
\end{equation}
and
\begin{equation}
\begin{aligned}
  \alpha_{13} &= \left( a + a\tau_u^* + a\sigma_u \right)H_U^{\Gamma}, \\
  \alpha_{14} &= b\sigma_u H_U^{\Gamma} I_{v2u}^b - b\tau_v^* (I_{u2v}^{g})^* H_V^{\Gamma} , \\ 
  \alpha_{23} &= -a\tau_u^* (I_{v2u}^g)^* H_U^{\Gamma} + a\sigma_v H_V^{\Gamma} I_{u2v}^b, \\ 
  \alpha_{24} &= (b + b\tau_v^* + b\sigma_v) H_V^{\Gamma} .
\end{aligned}
\end{equation} 
The matrix $A$ is symmetric and has zeros on the diagonal. To ensure that $w^* A w$ is non-positive we need all the elements of $A$ to vanish.
When the grids are conforming and the same SBP operators are used in $U$ and $V$, all interpolation operators can be replaced by identity matrices and the inner products are the same, i.e.\ $H_U^{\Gamma} = H_V^{\Gamma}$. The stability conditions then reduce to
\begin{equation} \label{eq:parameter_conditions}
\begin{aligned}
1 + \tau_u^* + \sigma_u &= 0 ,\\
\sigma_u - \tau_v^* &= 0 ,\\
-\tau_u^* + \sigma_v &= 0 ,\\
1 + \tau_v^* + \sigma_v &= 0 ,
\end{aligned}
\end{equation}
which is equivalent to
\begin{equation}
\begin{aligned}
  1 + \tau_u^* + \tau_v^* &= 0 ,\\
\sigma_u  &= \tau_v^* ,\\
\sigma_v &= \tau_u^* .
\end{aligned}
\end{equation}
There is one free parameter, but the only solution that treats the left and right directions identically is \cite{Berg2012heat,Nissen2012a}
\begin{equation} \label{eq:parameter_values}
  \tau_u = \sigma_u = \tau_v = \sigma_v = -1/2.
\end{equation}
Now we return to the general case of non-conforming grids. If the interpolation operators satisfy
\begin{equation} \label{eq:stability_cond_matrix}
\begin{aligned}
H_U^{\Gamma} I_{v2u}^b &= (I_{u2v}^{g})^* H_V^{\Gamma},\\
(I_{v2u}^g)^* H_U^{\Gamma} &= H_V^{\Gamma} I_{u2v}^b,
\end{aligned}
\end{equation}
then the condition that all entries of $A$ equal zero again reduces to \eqref{eq:parameter_conditions} and the parameter values in \eqref{eq:parameter_values} yield stability. The stability condition \eqref{eq:stability_cond_matrix} relates $I_{v2u}^b$ to $I_{u2v}^g$ and $I_{u2v}^b$ to $I_{v2u}^g$. Thus, we may use two pairs of operators that are unrelated to one another.  

\subsection{The stability condition in terms of Hilbert adjoints}
Let $N_U$ and $N_V$ denote the number of grid points along the interface in $U$ and $V$, respectively. The matrices $H_U^{\Gamma}$ and $H_V^{\Gamma}$ define inner products in $\C^{N_U}$ and $\C^{N_V}$ by
\begin{equation}
  \ip{\bfu_{\Gamma}}{\boldsymbol{\phi}_{\Gamma}}{U_{\Gamma}} =\bfu_{\Gamma}^* H_{U}^{\Gamma} \boldsymbol{\phi}_{\Gamma} , \quad \ip{\bfv_{\Gamma}}{\boldsymbol{\psi}_{\Gamma}}{V_{\Gamma}} =\bfv_{\Gamma}^* H_{V}^{\Gamma} \boldsymbol{\psi}_{\Gamma} .
\end{equation}
 Let the resulting inner product spaces be denoted by $U_{\Gamma}$ and $V_{\Gamma}$. It follows from the completeness of $\C$ that also $U_{\Gamma}$ and $V_{\Gamma}$ are complete, and hence Hilbert spaces. By definition, the Hilbert adjoint $L^{\dagger}$ of a linear operator $L:U_{\Gamma} \mapsto V_{\Gamma}$ satsifies
\begin{equation} \label{eq:def_hilbert}
  (v,Lu)_{V_{\Gamma}} = (L^{\dagger}v,u)_{U_{\Gamma}} \quad \forall \, u \in U_{\Gamma}, v \in V_{\Gamma} .
\end{equation}
Linear operators from $\C^m$ to $\C^n$ are represented by rectangular matrices and hence \eqref{eq:def_hilbert} is equivalent to the condition
\begin{equation}
  H_V^{\Gamma} L = (L^{\dagger})^* H_U^{\Gamma} .
\end{equation}
We note that the stability condition \eqref{eq:stability_cond_matrix} can be equivalently written as 
\begin{equation} \label{eq:stability_cond_adjoint}
\begin{aligned} 
I_{v2u}^b &= (I_{u2v}^{g})^{\dagger}, \\
I_{v2u}^g &= (I_{u2v}^b)^{\dagger} .
\end{aligned}
\end{equation}
To obtain a stable scheme, it is enough to choose two interpolation operators, say $I_{v2u}^g$ and $I_{u2v}^g$. The two remaining operators $I_{u2v}^b$ and $I_{v2u}^b$ are uniquely determined as the adjoints of the first two operators. But for the scheme to be accurate, both $I_{v2u}^g$ and $I_{u2v}^g$, as well as their adjoints, must be accurate interpolation operators. Naturally, the stability condition \eqref{eq:stability_cond_adjoint} is also essential for dual-consistent (or adjoint-consistent) discretizations with non-conforming grids.

Let $q(I)$ denote the order of accuracy of an interpolation operator $I$. Based on the stability and accuracy analysis in this section, we introduce the following definition.
\begin{defi} \label{def:op}
 Given two inner product matrices $H_U^{\Gamma}$ and $H_V^{\Gamma}$ that correspond to quadrature rules of order $2p$, we say that the interpolation operators $I_{u2v}^{g}$, $I_{u2v}^{b}$, $I_{v2u}^{g}$, and $I_{v2u}^{b}$ constitute a set of order preserving interpolation operators if
\begin{equation} 
\begin{aligned} 
I_{v2u}^b &= (I_{u2v}^{g})^{\dagger}, \\
I_{v2u}^g &= (I_{u2v}^b)^{\dagger} ,
\end{aligned}
\end{equation}
and
\begin{equation}
\begin{aligned} 
q \left( I_{u2v}^{g} \right) &= q \left( I_{v2u}^{g} \right) = p+1, \\ 
q \left( I_{u2v}^{b} \right) &= q \left( I_{v2u}^{b} \right) = p.
\end{aligned}
\end{equation}
\end{defi}
The order preserving (OP) operators are defined so that the scheme \eqref{eq:scheme_schr_heat} is stable with truncation error of order $p-1$ in maximum norm.

\section{Existence of interpolation operators} \label{sec:int_op}
In this section we will restate the known results that bound the sum of the orders of the interpolation operators $I_{u2v}$ and $I_{v2u} = I_{u2v}^{\dagger}$ from above. As an example, in the case of traditional SBP operators with $2p$th order interior stencils on both sides of the interface, the bound is 
\begin{equation} \label{eq:bound}
q(I_{u2v}) + q(I_{v2u}) \leq 2p+1.
\end{equation}
It is important to note that the sum of the orders is an odd number. When using only one adjoint pair of interpolation operators, the global order will be dictated by $\min(q(I_{u2v}), q(I_{v2u}))$, which can not exceed $p$. Hence, previous approaches \cite{Lundquist2018,MattssonCarpenter09,Nissen_15,Wang2017int} have not had a reason to let $\max(q(I_{u2v}), q(I_{v2u})) = p+1$. The OP approach with two pairs of operators utilizes the extra order to improve the global convergence rate.

After restating the known results we present a new theorem that shows that the bounds similar to \eqref{eq:bound} are always sharp. That is, one can always construct an adjoint pair $I_{u2v} = I_{v2u}^{\dagger}$ with the maximal accuracy allowed by the bounds. Further, the total order of accuracy may be divided arbitrarily between $I_{u2v}$ and $I_{v2u}$. Guided by the new existence result, we proceed to construct new OP interpolation operators for the special case of a 2:1 grid size ratio and $2p$th order interior stencils on both sides, for $2p=2,4,6,8$.

\subsection{Theoretical results}
Consider two vectors $\bfx_u = [x_{1},...,x_{N_u}]^T$ and $\bfx_v  = [\xi_{1},...,\xi_{N_v}]^T$ that discretize an interval $J = [\alpha,\beta]$. For monomials $x^j , \, j\geq 0$, we write e.g.\ $\bfx_u^j = [x^j_{1},...,x^j_{N_u}]^T $. Consider two inner product matrices $H_{u,v}$. In this section we use the inner product notation
\begin{equation}
  (\bfu, \bfphi)_u = \bfu^* H_u \bfphi , \quad (\bfv, \bfpsi)_v = \bfv^* H_v \bfpsi.
\end{equation}
We also introduce Vandermonde-like matrices
\begin{equation}
  X_u^{m,n} = [\bfx_{u}^{m},\bfx_{u}^{m+1},\ldots,\bfx_u^n].
\end{equation}
Assume that the inner product matrices $H_{u,v}$ correspond to quadrature rules of orders $q_{u,v}$ on $\bfx_{u,v}$. This means that $H_{u,v}$ integrate polynomials of degree less than $q_{u,v}$ exactly, i.e.,
\begin{equation} \label{eq:quad_deg_u}
  \ip{\bfx_u^i}{\bfx_u^j}{u} = \frac{1}{i+j+1} (\beta^{i+j+1} - \alpha^{i+j+1}) , \quad i+j < q_u ,
\end{equation}
and similarly for $H_v$. We also assume that the orders are not in fact higher than $q_{u,v}$, i.e.,
\begin{equation}
  \ip{\bfx_u^i}{\bfx_u^j}{u} \neq \frac{1}{i+j+1} (\beta^{i+j+1} - \alpha^{i+j+1}) , \quad i+j = q_u  ,
\end{equation}
with similar conditions for $H_v$. In the case $q_u = q_v = q$, we assume that $H_u$ and $H_v$ do not have the same leading order error, i.e.,
\begin{equation} \label{eq:assumption_trunc_error}
    \ip{\bfx_u^i}{\bfx_u^j}{u} \neq \ip{\bfx_v^i}{\bfx_v^j}{v} , \quad i+j = q .
\end{equation}
This condition is justifiable in practice. Consider for example the case when $H_u$ and $H_v$ are based on the same quadrature formula but with different numbers of grid points, e.g.\ $N_u > N_v$. Then we expect $H_u$ to have a smaller truncation error than $H_v$ and hence the leading order error terms will not be equal.

The order conditions \eqref{eq:quad_deg_u} imply that
\begin{equation} \label{eq:deg_vandermonde_uv}
  (X_u^{0,i})^T H_u X_u^{0,j} = (X_v^{0,i})^T H_v X_v^{0,j} , \quad i + j = \min(q_u,q_v) - 1 .
\end{equation}
An interpolation operator $I_{u2v}$ is accurate of order $q$ if it is exact for polynomials of degree up to $q-1$, i.e.
\begin{equation}
  I_{u2v} X_u^{0,q-1} = X_v^{0,q-1} .
\end{equation}
The first important theorem is due to Lundquist et al.\ \cite{Lundquist2018}.
\begin{theorem} \label{thm:max_order}
If $I_{v2u} = I_{u2v}^{\dagger}$, then
\begin{equation}
  q(I_{u2v}) + q(I_{v2u}) \leq \min(q_u,q_v) + 1 .
\end{equation}
\end{theorem}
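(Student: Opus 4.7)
The plan is to argue by contradiction: suppose that $q(I_{u2v}) = a$ and $q(I_{v2u}) = b$ satisfy $a + b \geq q + 2$, where $q := \min(q_u, q_v)$, and derive a contradiction with the assumed order of the quadratures.

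First I would exploit the definition of interpolation order to write the exactness conditions in Vandermonde form, namely $I_{u2v} X_u^{0,a-1} = X_v^{0,a-1}$ and $I_{v2u} X_v^{0,b-1} = X_u^{0,b-1}$. Next I would translate the Hilbert adjoint condition $I_{v2u} = I_{u2v}^{\dagger}$ into the matrix identity $H_u I_{v2u} = I_{u2v}^{*} H_v$, using the definition from \eqref{eq:def_hilbert}. The central calculation is then to evaluate the $a \times b$ matrix $(X_u^{0,a-1})^{*} I_{u2v}^{*} H_v X_v^{0,b-1}$ in two different ways: collapsing the left side with exactness of $I_{u2v}$ gives the matrix of moments $\left[(\bfx_v^i,\bfx_v^j)_v\right]_{0 \le i \le a-1,\, 0 \le j \le b-1}$, while inserting the adjoint identity and using exactness of $I_{v2u}$ gives the matrix $\left[(\bfx_u^i,\bfx_u^j)_u\right]$ on the same index set. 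Equating the two representations yields
\begin{equation}
  (\bfx_u^i, \bfx_u^j)_u = (\bfx_v^i, \bfx_v^j)_v, \qquad 0 \le i \le a-1,\ 0 \le j \le b-1.
\end{equation}

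Now I would argue that if $a + b \geq q + 2$, the index box $[0,a-1]\times[0,b-1]$ is large enough to contain at least one pair $(i,j)$ with $i+j = q$: given $a,b \geq 1$, set $i = \min(q, a-1)$ and $j = q - i$; the constraint $j \leq b-1$ reduces to $q \leq a + b - 2$, which holds by assumption. Consequently the moment equality above covers all bidegrees summing to $q$. This is supposed to contradict the sharpness of the quadrature orders. In the case $q_u = q_v = q$ the contradiction is immediate from the assumption \eqref{eq:assumption_trunc_error} that $H_u$ and $H_v$ disagree in their leading error. In the case $q_u \neq q_v$, say $q_u < q_v$, the moment $(\bfx_v^i, \bfx_v^j)_v$ equals the exact integral $\int_\alpha^\beta x^q \mathrm{d}x$ (since $q < q_v$), whereas $(\bfx_u^i, \bfx_u^j)_u$ fails to equal this integral for at least one such $(i,j)$ by the assumption that the order of $H_u$ is not higher than $q_u = q$, giving the required contradiction.

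The bookkeeping that I expect to be the main subtlety is precisely Step three above: the pairs $(i,j)$ produced by the moment equality fill a \emph{box}, not a triangle, and one must verify that some pair with $i+j = q$ lies inside this box under the hypothesis $a+b \geq q+2$. A secondary subtlety is that, strictly speaking, the assumption \eqref{eq:assumption_trunc_error} only asserts the existence of \emph{some} mismatching moment at total degree $q$, so the contradiction comes from the fact that the box equality forces \emph{every} degree-$q$ moment to match, which in particular covers the mismatching one. Once these two points are pinned down, the conclusion $q(I_{u2v}) + q(I_{v2u}) \leq \min(q_u,q_v) + 1$ follows.
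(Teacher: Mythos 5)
Your argument is essentially correct, but note that the paper itself does not prove Theorem \ref{thm:max_order}: its ``proof'' is a citation to Lundquist et al.\ \cite{Lundquist2018} (with the special case $q_u=q_v$ in \cite{Lundquist2015}), so there is no internal derivation to compare against. Your route --- writing the adjoint relation from \eqref{eq:def_hilbert} as $H_u I_{v2u} = I_{u2v}^{*} H_v$, testing it against the Vandermonde exactness conditions of both operators to obtain $(\bfx_u^i,\bfx_u^j)_u=(\bfx_v^i,\bfx_v^j)_v$ on the index box $0\le i\le a-1$, $0\le j\le b-1$, and then locating a pair with $i+j=q$ inside the box when $a+b\ge q+2$ --- is precisely the standard mechanism behind the cited result, and your box-counting step ($i=\min(q,a-1)$, $j=q-i$, with $a,b\ge 1$) is handled correctly, as is the case split between $q_u=q_v$ (use \eqref{eq:assumption_trunc_error}) and $q_u\ne q_v$ (the higher-order quadrature is exact at total degree $q$, the other is not).

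One sentence in your closing discussion is imprecise: the box equality does not force ``every degree-$q$ moment to match''; it forces matching only for pairs inside the box, and degree-$q$ pairs outside the box are untouched. So if \eqref{eq:assumption_trunc_error} (and the sharpness assumption on each quadrature) were read as asserting only that \emph{some} degree-$q$ moment mismatches, your contradiction could in principle miss the mismatching pair. The argument closes because the paper's assumptions, read as displayed (parallel to \eqref{eq:quad_deg_u}), assert the mismatch for \emph{all} $i+j=q$; moreover, for the diagonal norms the paper works with, $(\bfx^i,\bfx^j)$ depends only on $i+j$, so mismatch at one degree-$q$ pair is equivalent to mismatch at all of them. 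With that reading made explicit (and the implicit consistency assumption $q(I_{u2v}),q(I_{v2u})\ge 1$ that your pair selection uses), your proof is complete.
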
 
\begin{proof}
See \cite{Lundquist2018}.
\end{proof}
A proof of Theorem \ref{thm:max_order} for the special case $q_u=q_v$ first appeared in \cite{Lundquist2015}.

The following theorem shows that given two inner product matrices, it is always possible to construct an adjoint pair $I_{v2u} = I_{u2v}^{\dagger}$ with the maximal accuracy allowed by Theorem \ref{thm:max_order}.
\begin{theorem} \label{thm:existence}
Let $q_{u2v}$ and $q_{v2u}$ be integers such that $1 \leq q_{u2v} \leq N_u$, $1 \leq q_{v2u} \leq N_v$ and
\begin{equation}
  q_{u2v} + q_{v2u} \leq \min(q_u,q_v) + 1.
\end{equation}
 Then there exists $I_{u2v}$ such that we may set $I_{v2u} = I_{u2v}^{\dagger}$ and obtain
\begin{equation}
  q(I_{u2v}) = q_{u2v}, \quad q(I_{v2u}) = q_{v2u} .
\end{equation}
\end{theorem}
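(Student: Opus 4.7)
The plan is to eliminate $I_{v2u}$ via the adjoint identity, rephrase the two accuracy requirements as a pair of linear matrix equations for $A := I_{u2v}$ alone, verify that these equations are compatible as a direct consequence of the quadrature hypothesis \eqref{eq:deg_vandermonde_uv}, and then construct $A$ by prescribing its action on a basis of $\C^{N_u}$.

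Write $P_u := X_u^{0,q_{u2v}-1}$, $P_v := X_v^{0,q_{u2v}-1}$, $Q_u := X_u^{0,q_{v2u}-1}$, $Q_v := X_v^{0,q_{v2u}-1}$. The accuracy condition $q(I_{u2v}) = q_{u2v}$ is $A P_u = P_v$. Substituting $I_{v2u} = A^\dagger = H_u^{-1} A^* H_v$ into $I_{v2u} Q_v = Q_u$ and rearranging gives the equivalent condition $Q_v^T H_v A = Q_u^T H_u$. If both hold, left-multiplying $A P_u = P_v$ by $Q_v^T H_v$ and substituting the second equation forces
\[
  Q_u^T H_u P_u \;=\; Q_v^T H_v P_v.
\]
The $(i,j)$ entry of this identity is $(\bfx_u^i,\bfx_u^j)_u = (\bfx_v^i,\bfx_v^j)_v$ with $0 \leq i \leq q_{v2u}-1$ and $0 \leq j \leq q_{u2v}-1$, so $i+j \leq q_{u2v}+q_{v2u}-2 \leq \min(q_u,q_v)-1$ by hypothesis; this is precisely \eqref{eq:deg_vandermonde_uv}, so the two matrix equations are compatible.

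To construct $A$, extend the columns of $P_u$ to a basis $p_0,\ldots,p_{N_u-1}$ of $\C^{N_u}$ with $p_j = \bfx_u^j$ for $j < q_{u2v}$; this is possible because $q_{u2v} \leq N_u$ and Vandermonde columns at distinct nodes are linearly independent. Set $A p_j := \bfx_v^j$ for $j < q_{u2v}$, which immediately installs $A P_u = P_v$. For each $j \geq q_{u2v}$, choose $y_j := A p_j \in \C^{N_v}$ to satisfy the $q_{v2u} \times N_v$ linear system
\[
  Q_v^T H_v\, y_j \;=\; Q_u^T H_u\, p_j.
\]
Its coefficient matrix has full row rank, because $Q_v$ has full column rank $q_{v2u} \leq N_v$ and $H_v$ is invertible, so $y_j$ exists for every right-hand side. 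The analogous equation for $j < q_{u2v}$ reads $Q_v^T H_v \bfx_v^j = Q_u^T H_u \bfx_u^j$, which is the $j$-th column of the compatibility identity just established. Hence $Q_v^T H_v A$ and $Q_u^T H_u$ agree on every basis vector $p_j$, and therefore as matrices.

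The only substantive step is the compatibility check in the second paragraph; it is precisely where the hypothesis $q_{u2v}+q_{v2u} \leq \min(q_u,q_v)+1$ enters, mirroring the sharpness of Theorem \ref{thm:max_order}. The remaining argument is standard linear algebra, and the construction leaves an affine family of solutions of dimension $N_v-q_{v2u}$ for each free column of $A$, reflecting the substantial nonuniqueness of OP operator pairs.
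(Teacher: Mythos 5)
Your proposal is correct and follows essentially the same route as the paper's proof: you reduce the claim to the same two matrix equations ($I_{u2v}X_u^{0,q_{u2v}-1}=X_v^{0,q_{u2v}-1}$ and its adjoint counterpart), verify their compatibility via the quadrature exactness identity \eqref{eq:deg_vandermonde_uv} using $q_{u2v}+q_{v2u}\leq\min(q_u,q_v)+1$, and resolve the remaining freedom through the full row rank of $(X_v^{0,q_{v2u}-1})^T H_v$, which is exactly the paper's argument with its padding matrix $\widetilde{X}_v$ replaced by your equivalent column-by-column prescription of $I_{u2v}$ on a basis extending the monomials.
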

\begin{proof}
We seek $I_{u2v}$ such that $q(I_{u2v}) = q_{u2v}$, i.e.,
\begin{equation} \label{eq:cond_vander_1}
  I_{u2v} X_u^{0, \,q_{u2v}-1} = X_v^{0, \,q_{u2v}-1} .
\end{equation}
Upon setting $I_{v2u} = I_{u2v}^{\dagger}$, the condition that $q(I_{v2u}) = q_{v2u}$ yields
\begin{equation} 
  X_u^{0, \, q_{v2u}-1 } = I_{v2u} X_v^{0, \, q_{v2u}-1 } = I_{u2v}^{\dagger} X_v^{0, \, q_{v2u} -1} = H_u^{-1} I_{u2v}^{T} H_v X_v^{0, \, q_{v2u} -1} ,
\end{equation}
which we may write as
\begin{equation} \label{eq:cond_vander_2}
  I_{u2v}^{T} H_v X_v^{0, \, q_{v2u}-1 } = H_u X_u^{0, \, q_{v2u}-1 } .
\end{equation}
Thus, we are seeking an operator $I_{u2v}$ that satsfies the two accuracy conditions \eqref{eq:cond_vander_1} and \eqref{eq:cond_vander_2}.
At this point, we require the assumption that $N_u \geq q_{u2v}$, i.e.\ that there are sufficiently many grid points. We organize the remainder of the proof into two parts. First, we show that the result holds in the case $N_u = q_{u2v}$. Second, we prove the result for $N_u > q_{u2v}$.

Assume that $N_u=q_{u2v}$. In this case, $X_u^{0,q_{u2v}-1}=V_u^{q_{u2v}}$, where $V_u^{n}$ denotes the Vandermonde matrix of order $n$. The Vandermonde matrix is square and invertible, so the first accuracy condition \eqref{eq:cond_vander_1} determines $I_{u2v}$ uniquely:
\begin{equation}
  I_{u2v} = X_v^{0, \,q_{u2v}-1} (V_u^{q_{u2v}})^{-1}  .
\end{equation}
Substituting the expression for $I_{u2v}$ in the second condition \eqref{eq:cond_vander_2} yields
\begin{equation}
  (V_u^{q_{u2v}})^{-T} (X_v^{0,\, q_{u2v}-1})^{T} H_v X_v^{0,\,q_{v2u}-1} = H_u X_u^{0,\,q_{v2u}-1}, 
\end{equation}
which is equivalent to
\begin{equation} \label{eq:order_cond_quad}
  (X_v^{0,\,q_{u2v}-1})^{T} H_v X_v^{0,\,q_{v2u}-1} = (X_u^{0,\, q_{u2v}-1})^{T} H_u X_u^{0,\,q_{v2u}-1} .
\end{equation}
If \eqref{eq:order_cond_quad} is satisfied, then $I_{u2v}$ and $I_{u2v}^{\dagger}$ are accurate of orders $q_{u2v}$ and $q_{v2u}$. But \eqref{eq:order_cond_quad} is satisfied for any $q_{u2v}$, $q_{v2u}$ such that $q_{u2v} + q_{v2u}\leq \min(q_u,q_v) + 1$, because both quadrature matrices integrate such polynomials exactly. 

It remains to prove that we can find $I_{u2v}$ with the desired accuracy properties when $N_u > q_{u2v}$. To obtain an invertible matrix in the left-hand side of \eqref{eq:cond_vander_1} we can pad the system with extra equations where the right-hand side is arbitrary, i.e.,
\begin{equation} \label{eq:acc_vander_padded}
  I_{u2v} V_u^{N_u} = \widetilde{X}_v^{q_{u2v}-1}= \begin{bmatrix} X_{v}^{0,\,q_{u2v}-1} & \widetilde{X}_v \end{bmatrix} ,
\end{equation}
where the entries of $\widetilde{X}_v$ are arbitrary. The columns of $\widetilde{X}_v$ are the results of applying $I_{u2v}$ to polynomials of degree larger than or equal to $q_{u2v}$, which we do not need to put any conditions on. By solving \eqref{eq:acc_vander_padded} for $I_{u2v}$ and substituting in the second condition \eqref{eq:cond_vander_2}, we arrive at the system
\begin{equation} \label{eq:vandermonde_tilde}
  (\widetilde{X}_v^{q_{u2v}-1})^{T} H_v X_v^{0,\, q_{v2u}-1} = (V_u^{N_u})^{T} H_u X_u^{0,\, q_{v2u}-1}, 
\end{equation}
and we must prove that there exists $\widetilde{X}_v$ that satisfies this equation. Using the block structure of $\widetilde{X}_v^{q_{u2v}-1}$ and $V_u^{N_u}$, \eqref{eq:vandermonde_tilde} can be written as
\begin{equation}
  \begin{bmatrix} (X_{v}^{0,\,q_{u2v}-1})^T H_v X_v^{0,\, q_{v2u}-1}\\ \widetilde{X}_v^T H_v X_v^{0,\, q_{v2u}-1}\end{bmatrix}
   =
  \begin{bmatrix}
    (X_u^{0, \, q_{u2v}-1})^{T}H_u X_u^{0,\, q_{v2u}-1} \\ (X_u^{q_{u2v}, \,N_u-1})^{T} H_u X_u^{0,\, q_{v2u}-1} 
  \end{bmatrix} .
\end{equation}
The upper system of equations is again satisfied if $q_{u2v} + q_{v2u} \leq \min(q_u,q_v)+1$ because both quadrature matrices integrate such polynomials exactly. It remains to show that we can find $\widetilde{X}_v$ that satisfies
\begin{equation}
  \widetilde{X}_v^T H_v X_v^{0,\, q_{v2u}-1} = (X_u^{q_{u2v}, \,N_u-1})^{T} H_u X_u^{0,\, q_{v2u}-1} .
\end{equation}
Let $A=(X_v^{0,\, q_{v2u}-1})^{T} H_v$ and $B = (X_u^{0,\, q_{v2u}-1})^{T} H_u X_u^{q_{u2v}, \,N_u-1}$ so that the system can be written as
\begin{equation} \label{eq:lin_sys_eq_thm3_AB}
  A \widetilde{X}_v = B ,
\end{equation} 
where $A \in \R^{q_{v2u} \times N_v}$. A sufficient condition for a solution matrix $\widetilde{X}_v$ to exist is that $A$ has full row rank. Because 
\begin{equation}
  \rank(H_v) = N_v , \quad \rank(X_v^{0,\, q_{v2u}-1}) = \min(q_{v2u},N_v) ,
\end{equation}
it follows that
\begin{equation}
  \rank(A) = \min(q_{v2u},N_v) .
\end{equation}
Hence, $A$ has full row rank if $N_v \geq q_{v2u}$. This proves the theorem.
\end{proof}

\begin{remark}
Let $\min(q_u,q_v)=2p$. By Theorem \ref{thm:max_order}, $q(I_{u2v}) + q(I_{v2u}) \leq 2p+1$. Theorem \ref{thm:existence} guarantees that interpolation operator pairs such that $q(I_{u2v}) + q(I_{v2u}) = 2p+1$ exist.
Further, it shows that we may distribute the total order of $2p+1$ freely. In this paper, we are only interested in operators such that
\begin{equation}
  q(I_{u2v}) = p+1 , \quad q(I_{v2u}) = p,
\end{equation}
or
\begin{equation}
  q(I_{u2v}) = p , \quad q(I_{v2u}) = p+1 ,
\end{equation}
because these choices lead to a balance of truncation errors in the numerical scheme. 
However, operator pairs such that e.g.\
\begin{equation}
  q(I_{u2v}) = p+2 , \quad q(I_{v2u}) = p-1 ,
\end{equation}
are also guaranteed to exist.
\end{remark}

\begin{remark}
Note that Theorem \ref{thm:existence} only concerns the interpolation error in maximum norm. For e.g.\ finite volume, finite element and discontinuous Galerkin methods, this is all that matters. However, to obtain the ideal convergence rate with traditional finite difference methods we require the interpolation operators to have smaller $\ell^2$-errors than the point-wise errors guaranteed by Theorem \ref{thm:existence}. That is, the interpolation operators should be accurate of orders $q_{u2v}$ and $q_{v2u}$ at $\Or(1)$ grid points only, and at least one order more accurate at remaining grid points. Theorem \ref{thm:existence} does not guarantee that such operators exist. In our experience however, it is not difficult to obtain high order for the interior grid points where the quadrature weights are constant.
\end{remark}

\subsection{Examples of order preserving interpolation operators}

For the numerical experiments in section \ref{sec:num_exp}, we have constructed OP interpolation operators for the special case of a 2:1 grid size ratio and $2p$th order interior stencils on both sides, for $2p=2,4,6,8$. They are compatible with diagonal-norm SBP operators with minimal number of boundary points on equidistant grids, see e.g.\ \cite{strand94}. Actually, they are compatible with any SBP operator based on the same norm matrix. The norm matrices $H_{2p}$ are
\begin{equation}
\begin{aligned}
  H_2 &= h \, \mbox{diag}\left( \left[ \tfrac{1}{2} \; 1 \; \cdots  \right] \right), \\ 
  H_4 &= h \, \mbox{diag}\left( \left[ \tfrac{17}{48} \; \tfrac{59}{48} \; \tfrac{43}{48} \; \tfrac{49}{48} \; 1 \; \cdots  \right] \right), \\
  H_6 &= h \, \mbox{diag}\left( \left[ \tfrac{13649}{43200} \; \tfrac{12013}{8640} \; \tfrac{2711}{4320} \; \tfrac{5359}{4320} \; \tfrac{7877}{8640} \; \tfrac{43801}{43200} \; 1 \; \cdots  \right] \right), \\
  H_8 &= h \, \mbox{diag}\left( \left[ \tfrac{1498139}{5080320} \; \tfrac{1107307}{725760} \; \tfrac{20761}{80640} \; 
            \tfrac{1304999}{725760} \; \tfrac{299527}{725760} \; \tfrac{103097}{80640} \; \tfrac{670091}{725760} \;
            \tfrac{5127739}{5080320} \; 1 \; \cdots  \right] \right). \\
\end{aligned}
\end{equation}
 As prescribed in section \ref{def:op}, the constructed OP operators satisfy
 $q(I_{u2v}^g)=q(I_{v2u}^g)=p+1$ and $q(I_{u2v}^b)=q(I_{v2u}^b)=p$. These large truncation errors are localized to the boundary closures, which are comparable in size to the boundary closures of the corresponding difference operators. All constructed interpolation operators have repeating stencils of order $2p$ in the interior. 

When constructing the OP operators, we made an ansatz that $I_{u2v}$ is sparse, with non-zero boundary blocks of size $m \times n$ and an interior bandwidth $d$. 
The accuracy conditions of $I_{u2v}$ and $I_{u2v}^{\dagger}$ were required to be fulfilled exactly. If the linear system that results from the accuracy conditions did not have a solution, $m$, $n$ and $d$ were successively increased until the system became solvable. Any remaining free parameters were then used to minimize the $\ell^2$ error when interpolating a sine function with 8 grid points per wavelength on the coarse grid. Figure \ref{fig:operator_figures} shows the sparsity pattern for the $p=2$ case. The OP operators are available at
\url{https://bitbucket.org/martinalmquist/op_interpolation_operators}.

\begin{figure}[h]
    \begin{subfigure}[b]{.5\linewidth}
        \centering
        \includegraphics[width=4.5cm]{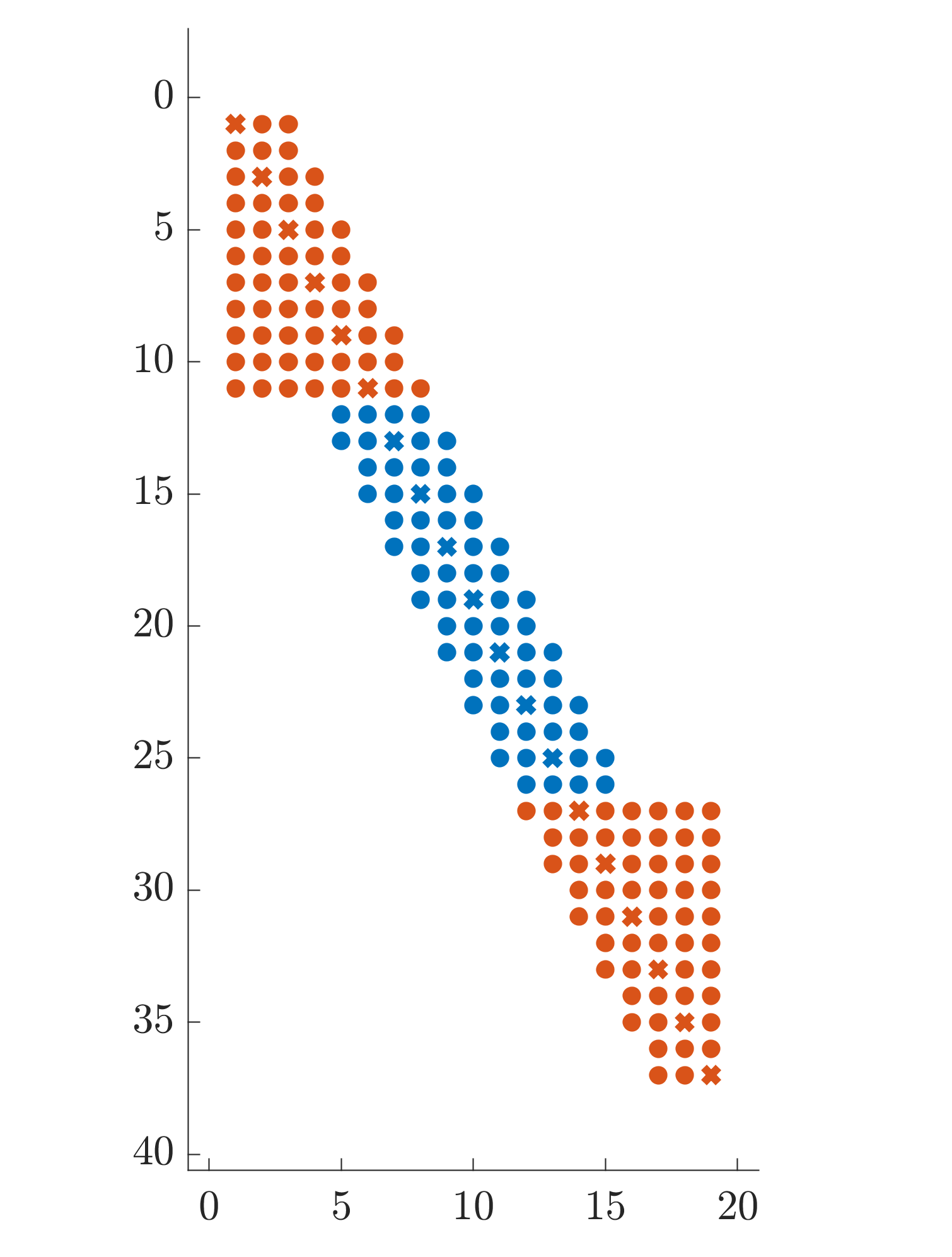}
        \caption{$I^g_{u2v}$}\label{fig:Icf}
    \end{subfigure}%
    \begin{subfigure}[b]{.5\linewidth}
        \centering
        \includegraphics[height=4.5cm]{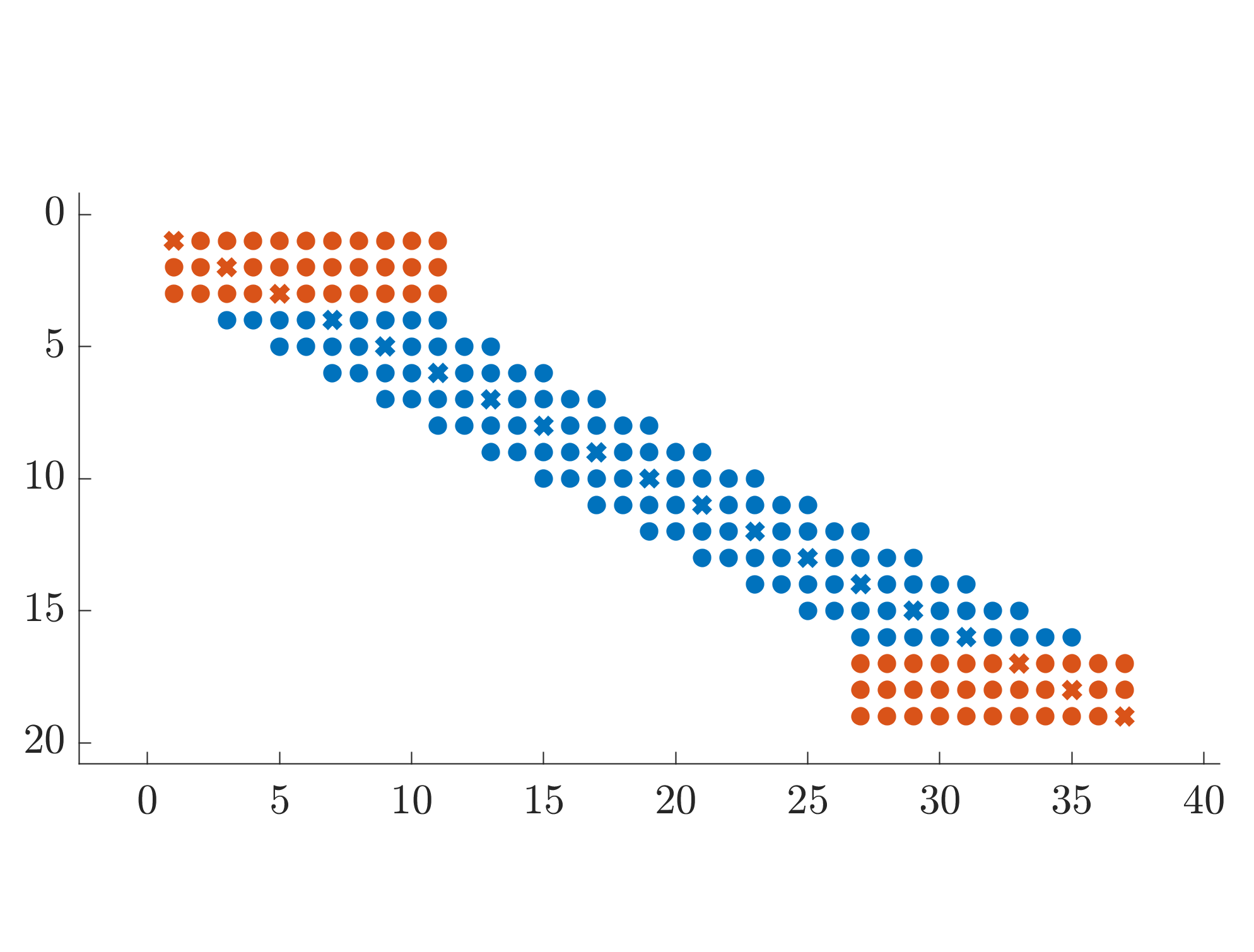}
        \caption{$I^b_{v2u}$}\label{fig:Ifc}
    \end{subfigure}
    \caption{Non-zero elements for an example pair of interpolation operators ($p=2$). The grid ratio is 2 and the coarse grid has 20 points. Modified boundary blocks are shown in red.}
    \label{fig:operator_figures}
\end{figure}

\section{The second order wave equation} \label{sec:wave}

In this section we will use OP interpolation operators to derive stable and accurate discretizations of the wave equation with non-conforming interfaces. The new schemes are similar to those derived in \cite{Wang2017int}, but with truncation errors of order $p-1$ instead of $p-2$. Since numerical experiments in \cite{Wang2017int} showed global convergence rates of order $p+1$, one may expect the new schemes to converge with rate $p+2$, which is supported by numerical experiments in section \ref{sec:num_exp}.

We consider the problem
\begin{equation} \label{eq:wave_2D_continuous}
\begin{array}{rll}
\smallskip
u_{tt} - c_1^2 \Delta u = 0, & (x,y) \in U, & t \in [0, \,T], \\
\smallskip
v_{tt} - c_2^2 \Delta v = 0, & (x,y) \in V, & t \in [0, \,T], \\
\smallskip
u -  v = 0, & (x,y) \in \Gamma, & t \in [0, \,T], \\
\smallskip
c_1^2 \pdd{u}{\hat{n}_U} + c_2^2 \pdd{v}{\hat{n}_V} = 0, & (x,y) \in \Gamma, & t \in [0,\, T] , \\
\end{array}
\end{equation}
augmented with initial data for $u$, $u_t$, $v$, and $v_t$. We assume that $c_{1,2}$ are real, positive constants. The problem \eqref{eq:wave_2D_continuous} is energy-conserving and satisfies
\begin{equation}
  \dd{}{t} E_{wave} = 0 ,
\end{equation}
where
\begin{equation} \label{eq:energy_cont_wave}
  E_{wave} = \frac{1}{2} \left(\norm{u_t}^2_{U} + c_1^2 \norm{\nabla u}^2_{U} +   \norm{v_t}^2_{V} + c_2^2 \norm{\nabla v}^2_{V} \right) .
\end{equation}

\subsection{Semi-discrete approximation}
The semi-discrete approximation of \eqref{eq:wave_2D_continuous} can be written as
\begin{equation} \label{eq:scheme_wave}
\begin{aligned}
\bfu_{tt} - c_1^2 D_{\Delta} \bfu &= SAT_u, \\
\bfv_{tt} - c_2^2 D_{\Delta} \bfv &= SAT_v, \\
\end{aligned}
\end{equation}
where $SAT_{u,v}$ are penalty terms that weakly impose the interface conditions on $\Gamma$. We here make the ansatz
\footnotesize
\begin{equation}
\begin{aligned}
    SAT_u = &-H_U^{-1} \left[
        \frac{\tau_u}{h_u} c_1^2 e_E H_U^{\Gamma}(e_E^T \bfu - I_{v2u}^{g} e_W^T \bfv)
        + \frac{\sigma_u}{h_v} c_2^2 e_E H_U^{\Gamma} I_{v2u}^{b} (I_{u2v}^g e_E^T \bfu - e_W^T \bfv)
    \right] \\
    &+ H_U^{-1} \left[
        \frac{c_1^2}{2} d_E H_U^{\Gamma} (e_E^T \bfu - I_{v2u}^{g} e_W^T \bfv)
        - \frac{1}{2} e_E H_U^{\Gamma} (c_1^2 d_E^T \bfu + c_2^2 I_{v2u}^{b} d_W^T \bfv)
    \right] \\
    SAT_v = &- H_V^{-1} \left[
        \frac{\tau_v}{h_v} c_2^2 e_W H_V^{\Gamma}(e_W^T \bfv - I_{u2v}^{g} e_E^T \bfu)
        + \frac{\sigma_v}{h_u} c_1^2 e_W H_V^{\Gamma} I_{u2v}^{b} (I_{v2u}^g e_W^T \bfv - e_E^T \bfu)
    \right] \\
    &+ H_V^{-1} \left[
        \frac{c_2^2}{2} d_W H_V^{\Gamma} (e_W^T \bfv - I_{u2v}^{g} e_E^T \bfu)
        - \frac{1}{2} e_W H_V^{\Gamma} (c_2^2 d_W^T \bfv + c_1^2 I_{u2v}^{b} d_E^T \bfu)
    \right] .
\end{aligned}
\end{equation}
\normalsize
Assuming that the penalty parameters $\tau_{u,v}$ and $\sigma_{u,v}$ all are $\Or(1)$, all the SATs give rise to local truncation errors that are $\Or(h^{p-1})$.

Compared to the SBP-SAT method for conforming interfaces \cite{MattssonHam08}, there is an additional penalty term on each side of the interface. The second term in $SAT_u$ evaluates the residual of the condition $u=v$ on the grid at the $v$-side of the interface, and then uses $I_{v2u}^b$ to interpolate the residual to the $u$ grid. We point out that the order of accuracy of $I_{v2u}^b$ does not affect the order of the local truncation error of this term. The second term in $SAT_v$ is analogous. Notice that in the case of conforming grids we may replace all interpolation operators by the identity matrix, in which case the above ansatz reduces to the one used in \cite{MattssonHam08}.

\begin{theorem} \label{thm:wave}
The scheme \eqref{eq:scheme_wave} is stable if $\sigma_v = \tau_u = \frac{\theta_u}{4\gamma_u}$ and $\sigma_u = \tau_v = \frac{\theta_v}{4\gamma_v}$, where $\theta_u, \theta_v \geq 1$.
\end{theorem}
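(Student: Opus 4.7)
The plan is to mimic the continuous energy argument via the SBP--SAT machinery. First, I take the discrete inner product of the first line of \eqref{eq:scheme_wave} with $\bfu_t$ in $U$ and of the second with $\bfv_t$ in $V$, add the complex conjugates, and invoke the SBP relations \eqref{eq:sbp_property_2d_u}--\eqref{eq:sbp_property_2d_v}. The $D_\Delta$-terms produce $-\tfrac{d}{dt}\ip{\widetilde D_{\nabla} \bfu}{\widetilde D_{\nabla} \bfu}{U}$ and $-\tfrac{d}{dt}\ip{\widetilde D_{\nabla} \bfv}{\widetilde D_{\nabla} \bfv}{V}$, together with the interface boundary terms $2c_1^2\,\Re\bip{e_E^T \bfu_t}{d_E^T \bfu}{U}$ and $2c_2^2\,\Re\bip{e_W^T \bfv_t}{d_W^T \bfv}{V}$. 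Collecting everything on one side gives
\begin{equation*}
\tfrac{d}{dt} E_h^{(0)} = \mathcal{B} + 2\Re\ip{\bfu_t}{SAT_u}{U} + 2\Re\ip{\bfv_t}{SAT_v}{V},
\end{equation*}
where $E_h^{(0)} = \norm{\bfu_t}^2_U + c_1^2 \ip{\widetilde D_{\nabla} \bfu}{\widetilde D_{\nabla} \bfu}{U} + \norm{\bfv_t}^2_V + c_2^2 \ip{\widetilde D_{\nabla} \bfv}{\widetilde D_{\nabla} \bfv}{V}$ is the naive discrete energy and $\mathcal{B}$ collects the two boundary terms above.

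Second, I would expand the SAT contributions and reorganise the resulting interface terms. The SAT ansatz has been engineered so that, using the adjoint relations $I_{v2u}^{b}=(I_{u2v}^{g})^{\dagger}$ and $I_{v2u}^{g}=(I_{u2v}^{b})^{\dagger}$ from Definition~\ref{def:op}, the cross-interface bilinear forms coupling $\bfu_t$ on one block to $\bfv$ on the other recombine into exact time derivatives of non-negative quadratic forms in the interface data, in particular of $\bnorm{e_E^T \bfu - I_{v2u}^{g} e_W^T \bfv}{U}^{2}$ and $\bnorm{e_W^T \bfv - I_{u2v}^{g} e_E^T \bfu}{V}^{2}$. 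The terms in $SAT_{u,v}$ carrying the factor $1/2$ in front of $d_E H_U^{\Gamma}$ and $d_W H_V^{\Gamma}$ cancel, again via the adjoint relations, precisely those entries of $\mathcal{B}$ that pair an interface trace on one block with a normal derivative on the other. Pairing of $\sigma_u$ with $\tau_v$ and $\sigma_v$ with $\tau_u$ across the interface, which is exactly what the hypothesis $\sigma_v = \tau_u$ and $\sigma_u = \tau_v$ enforces, is what makes these cancellations and total-derivative collapses possible.

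Third, I would invoke the borrowing identities \eqref{eq:borrowing_u}--\eqref{eq:borrowing_v} to split $c_1^2 \ip{\widetilde D_{\nabla} \bfu}{\widetilde D_{\nabla} \bfu}{U}$ into the non-negative bulk form $c_1^2 \bfu^{*}(\widetilde{M}_x + \widetilde{M}_y)\bfu$ plus the boundary piece $c_1^2 h_u \gamma_u \bnorm{d_E^T \bfu}{U}^{2}$, and analogously on $V$. Absorbing the squared-residual penalty terms together with the borrowed $\bnorm{d_E^T \bfu}{U}^{2}$ and $\bnorm{d_W^T \bfv}{V}^{2}$ contributions into a modified discrete energy $E_h$, its non-negativity reduces, on each side, to positivity of a $2 \times 2$ block with diagonal entries $\tau_u c_1^{2}/h_u$ and $h_u \gamma_u c_1^{2}$ and off-diagonal entry $c_1^{2}/2$ coming from the $d_E$-scaled SAT term. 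The determinant condition is $\tau_u \gamma_u \geq 1/4$, and analogously $\tau_v \gamma_v \geq 1/4$, i.e.\ $\theta_u, \theta_v \geq 1$, matching the hypothesis of the theorem.

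The main obstacle I anticipate is the second step: systematically pairing the eight SAT contributions across the interface using the OP adjoint relations so that every surviving term is either a total time derivative of a non-negative quadratic form or is cancelled by a piece of $\mathcal{B}$. Once $E_h \geq 0$ is verified and the identity $\tfrac{d}{dt} E_h = 0$ is established, energy conservation---and hence stability in the sense of a bounded semi-discrete energy---follows in direct parallel to the conforming analysis of \cite{MattssonHam08}.
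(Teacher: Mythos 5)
Your proposal follows essentially the same route as the paper's proof: the energy method with $\bfu_t,\bfv_t$, the adjoint relations $I_{v2u}^{b}=(I_{u2v}^{g})^{\dagger}$, $I_{v2u}^{g}=(I_{u2v}^{b})^{\dagger}$ to move interpolation operators between the $U$- and $V$-inner products so that, with $\sigma_v=\tau_u$ and $\sigma_u=\tau_v$, all interface terms collapse into total time derivatives, followed by the borrowing identities \eqref{eq:borrowing_u}--\eqref{eq:borrowing_v} and a completing-the-square (equivalently your $2\times 2$ determinant) argument giving $\tau_u\gamma_u\geq \tfrac14$, $\tau_v\gamma_v\geq \tfrac14$, i.e.\ $\theta_{u,v}\geq 1$. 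The only slight imprecision is in your description of the $\tfrac12$-weighted SAT terms: $\mathcal{B}$ contains only same-block trace/normal-derivative pairings, and these terms symmetrize those pairings into total time derivatives while the cross-block trace/derivative products pair with their mirror images from the other equation, exactly as in the paper's $A_u$, $A_v$.
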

\begin{proof}
Multiplying the first equation in \eqref{eq:scheme_wave} by $\bfu_t^T H_U$ yields
\begin{equation}\label{ut_utt}
\begin{aligned}
    \ip{\bfu_t}{\bfu_{tt}}{U} = \; & c_1^2 \ip{\bfu_t}{D_{\Delta} \bfu}{U}
    - \frac{\tau_u}{h_u} c_1^2 \bip{e_E^T \bfu_t}{e_E^T \bfu - I_{v2u}^g e_W^T \bfv}{U} \\
   &- \frac{\sigma_u}{h_v} c_2^2 \bip{e_E^T \bfu_t}{I_{v2u}^b (I_{u2v}^g e_E^T \bfu - e_W^T \bfv)}{U} \\
   &+ \frac{c_1^2}{2} \bip{d_E^T \bfu_t}{e_E^T \bfu - I_{v2u}^g e_W^T \bfv}{U} \\
   &- \frac{1}{2} \bip{e_E^T \bfu_t}{c_1^2 d_E^T \bfu + c_2^2 I_{v2u}^b d_W^T \bfv}{U} . \\
\end{aligned}
\end{equation}
We now rewrite the terms one by one. By the SBP properties of $D_{\Delta}$, we have 
\begin{equation}
  c_1^2 \ip{\bfu_t}{D_{\Delta} \bfu}{U} = c_1^2 \bip{e_E^T \bfu_t}{d_E^T \bfu}{U} - c_1^2 \ip{\widetilde D_{\nabla} \bv{u}_t}{\widetilde D_{\nabla} \bv{u}}{U} .
\end{equation}
We write the first boundary integral in \eqref{ut_utt} as the sum of two integrals,
\begin{equation}
\begin{aligned}
  - \frac{\tau_u}{h_u} c_1^2 \bip{e_E^T \bfu_t}{e_E^T \bfu - I_{v2u}^g e_W^T \bfv}{U} = &- \frac{\tau_u}{h_u} c_1^2 \bip{e_E^T \bfu_t}{e_E^T \bfu}{U} \\
  &+ \frac{\tau_u}{h_u} c_1^2 \bip{e_E^T \bfu_t}{I_{v2u}^g e_W^T \bfv}{U}. 
\end{aligned}
\end{equation}
We also write the second boundary integral in \eqref{ut_utt} as the sum of two integrals, and use the adjoint property of the interpolation operators in the first of them to obtain
\begin{equation}
\begin{aligned}
  -\frac{\sigma_u}{h_v} c_2^2 \bip{e_E^T \bfu_t}{I_{v2u}^b (I_{u2v}^g e_E^T \bfu - e_W^T \bfv)}{U} = &- \frac{\sigma_u}{h_v} c_2^2 \bip{I_{u2v}^g e_E^T \bfu_t}{I_{u2v}^g e_E^T \bfu}{V} \\ 
  &+ \frac{\sigma_u}{h_v} c_2^2 \bip{e_E^T \bfu_t}{ I_{v2u}^b e_W^T \bfv}{U}.
\end{aligned}
\end{equation}
For the third boundary integral in \eqref{ut_utt}, we have
\begin{equation}
  \frac{c_1^2}{2} \bip{d_E^T \bfu_t}{e_E^T \bfu - I_{v2u}^g e_W^T \bfv}{U} = \frac{c_1^2}{2} \bip{d_E^T \bfu_t}{e_E^T \bfu}{U} - \frac{c_1^2}{2} \bip{d_E^T \bfu_t}{I_{v2u}^g e_W^T \bfv}{U} .
\end{equation}
Using the adjoint property in the last boundary integral in \eqref{ut_utt} leads to
\begin{equation}
\begin{aligned}
  - \frac{1}{2} \bip{e_E^T \bfu_t}{c_1^2 d_E^T \bfu + c_2^2 I_{v2u}^b d_W^T \bfv}{U} = &- \frac{c_1^2}{2} \bip{e_E^T \bfu_t}{ d_E^T \bfu}{U} \\
   &- \frac{c_2^2}{2} \bip{I_{u2v}^{g} e_E^T \bfu_t}{d_W^T \bfv}{V} .
\end{aligned}
\end{equation}
Gathering terms, we obtain
\begin{equation} \label{eq:ut_utt_2}
\begin{aligned}
  \ip{\bfu_t}{\bfu_{tt}}{U} = &-\frac{c_1^2}{2}\dd{}{t} \left[ \frac{\tau_u}{h_u} \bnorm{e_E^T \bfu}{U}^2 - \bip{e_E^T \bfu}{d_E^T \bfu}{U} + \norm{\widetilde D_{\nabla} \bv{u}}_{U}^2 \right] \\
  & -\frac{\sigma_u}{h_v} \frac{c_2^2}{2} \dd{}{t} \bnorm{I_{u2v}^g e_E^T \bfu}{V}^2  \\
  &-\frac{c_1^2}{2} \bip{d_E^T \bfu_t}{I_{v2u}^g e_W^T \bfv}{U} + \frac{\tau_u}{h_u} c_1^2 \bip{e_E^T \bfu_t}{I_{v2u}^g e_W^T \bfv}{U}\\
  &+ \frac{\sigma_u}{h_v} c_2^2 \bip{I_{u2v}^g e_E^T \bfu_t}{e_W^T \bfv}{V} - \frac{c_2^2}{2} \bip{I_{u2v}^{g} e_E^T \bfu_t}{d_W^T \bfv}{V} .
\end{aligned}
\end{equation}
By repeating the procedure above for the second equation in \eqref{eq:scheme_wave}, we arrive at a similar expression for $\ip{\bfv_t}{\bfv_{tt}}{V}$: 
\begin{equation} \label{eq:vt_vtt}
\begin{aligned}
  \ip{\bfv_t}{\bfv_{tt}}{V} = &-\frac{c_2^2}{2}\dd{}{t} \left[ \frac{\tau_v}{h_v}  \bnorm{e_W^T \bfv}{V}^2 -  \bip{e_W^T \bfv}{d_W^T \bfv}{V} + \norm{\widetilde D_{\nabla} \bv{v}}_{V}^2 \right] \\ 
  & -\frac{\sigma_v}{h_u} \frac{c_1^2}{2} \dd{}{t} \bnorm{I_{v2u}^g e_W^T \bfv}{U}^2 \\
  &-\frac{c_2^2}{2} \bip{d_W^T \bfv_t}{I_{u2v}^g e_E^T \bfu}{V} + \frac{\tau_v}{h_v} c_2^2 \bip{e_W^T \bfv_t}{I_{u2v}^g e_E^T \bfu}{V}\\
  &+ \frac{\sigma_v}{h_u} c_1^2 \bip{I_{v2u}^g e_W^T \bfv_t}{ e_E^T \bfu}{U} - \frac{c_1^2}{2} \bip{I_{v2u}^{g} e_W^T \bfv_t}{d_E^T \bfu}{U} . \\ 
\end{aligned}
\end{equation}
Adding \eqref{eq:ut_utt_2} and \eqref{eq:vt_vtt} leads to
\begin{equation}
\begin{aligned}
  &\frac{1}{2} \dd{}{t} \left( \norm{\bfu_t}_U^2 + \norm{\bfv_t}_V^2 + c_1^2 \norm{\widetilde D_{\nabla} \bv{u}}_{U}^2 + c_2^2\norm{\widetilde D_{\nabla} \bv{v}}_{V}^2 \right) = \\
&- \frac{c_1^2}{2} \dd{}{t} \left[ \frac{\tau_u}{h_u} \bnorm{e_E^T \bfu}{U}^2 - \bip{e_E^T \bfu}{d_E^T \bfu}{U} + \frac{\sigma_v}{h_u} \bnorm{I_{v2u}^g e_W^T \bfv}{U}^2 \right] \\
&- \frac{c_2^2}{2}\dd{}{t} \left[ \frac{\tau_v}{h_v}  \bnorm{e_W^T \bfv}{V}^2 - \bip{e_W^T \bfv}{d_W^T \bfv}{V} + \frac{\sigma_u}{h_v} \bnorm{I_{u2v}^g e_E^T \bfu}{V}^2 \right] \\
& - \frac{c_1^2}{2} \bip{d_E^T \bfu_t}{I_{v2u}^g e_W^T \bfv}{U} - \frac{c_1^2}{2} \bip{I_{v2u}^{g} e_W^T \bfv_t}{d_E^T \bfu}{U} \\
&+ \frac{\tau_u}{h_u} c_1^2 \bip{e_E^T \bfu_t}{I_{v2u}^g e_W^T \bfv}{U} + \frac{\sigma_v}{h_u} c_1^2 \bip{I_{v2u}^g e_W^T \bfv_t}{ e_E^T \bfu}{U} \\
&+ \frac{\sigma_u}{h_v} c_2^2 \bip{I_{u2v}^g e_E^T \bfu_t}{e_W^T \bfv}{V} + \frac{\tau_v}{h_v} c_2^2 \bip{e_W^T \bfv_t}{I_{u2v}^g e_E^T \bfu}{V} \\
&+ \frac{c_2^2}{2} \bip{I_{u2v}^{g} e_E^T \bfu_t}{d_W^T \bfv}{V} -\frac{c_2^2}{2} \bip{d_W^T \bfv_t}{I_{u2v}^g e_E^T \bfu}{V} .
\end{aligned}
\end{equation}
The choice $\tau_u = \sigma_v$, $\tau_v = \sigma_u$ yields
\begin{equation} \label{eq:ddt_E_zero_discr_wave}
  \dd{}{t} \mathcal{E}_{wave} = 0,
 \end{equation} 
 where we have defined
 \begin{equation} \label{eq:discr_energy_wave}
 \begin{aligned}
  \mathcal{E}_{wave} &= \frac{1}{2} \left( \norm{\bfu_t}_U^2 + \norm{\bfv_t}_V^2 + c_1^2 \norm{\widetilde D_{\nabla} \bv{u}}_{U}^2  + c_2^2 \norm{\widetilde D_{\nabla} \bv{v}}_{V}^2 + c_1^2 A_u + c_2^2 A_v\right) ,
 \end{aligned}  
 \end{equation}
 with
\begin{equation}
\begin{aligned}
    A_u = \; & \frac{\tau_u}{h_u} \left(\bnorm{e_E^T \bfu}{U}^2 - 2 \bip{e_E^T \bfu}{I_{v2u}^g e_W^T \bfv}{U} + \bnorm{I_{v2u}^g e_W^T \bfv}{U}^2 \right) \\ 
    &- \bip{e_E^T \bfu}{d_E^T \bfu}{U} + \bip{d_E^T \bfu}{I_{v2u}^g e_W^T \bfv}{U} \\
    = \;& \frac{\tau_u}{h_u} \bnorm{e_E^T \bfu-I_{v2u}^g e_W^T \bfv}{U}^2 - \bip{d_E^T \bfu}{e_E^T \bfu - I_{v2u}^g e_W^T \bfv}{U},
\end{aligned}
\end{equation}
and
\begin{equation}
\begin{aligned}
  A_v = \frac{\tau_v}{h_v}  \bnorm{e_W^T \bfv - I_{u2v}^g e_E^T \bfu}{V}^2 - \bip{d_W^T \bfv}{e_W^T \bfv - I_{u2v}^{g} e_E^T \bfu}{V} .
\end{aligned}
\end{equation}
We note that $A_u$ and $A_v$ are zero to the order of accuracy because $I_{v2u}^g e_W^T \bfv \simeq e_E^T \bfu$ and $I_{u2v}^g e_E^T \bfu \simeq e_W^T \bfv$. Hence, the discrete energy $\mathcal{E}_{wave}$ mimics the continuous energy $E_{wave}$ in \eqref{eq:energy_cont_wave}.

For stability it remains to prove that we can choose $\tau_{u,v}$ so that $\mathcal{E}_{wave}$ is non-negative. By completing the squares in $A_{u,v}$ we obtain
\begin{equation}
  A_u = \frac{\tau_u}{h_u} \bnorm{e_E^T \bfu-I_{v2u}^g e_W^T \bfv - \frac{h_u}{2 \tau_u} d_E^T \bfu }{U}^2 - \frac{h_u}{4 \tau_u } \bnorm{d_E^T \bfu}{U}^2 ,
\end{equation}
and
\begin{equation}
  A_v = \frac{\tau_v}{h_v} \bnorm{e_W^T \bfv - I_{u2v}^g e_E^T \bfu - \frac{h_v}{2 \tau_v} d_W^T \bfv }{V}^2 - \frac{h_v}{4 \tau_v} \bnorm{d_W^T \bfv}{V}^2 .
\end{equation}
Because of \eqref{eq:borrowing_u} and \eqref{eq:borrowing_v}, we have
\begin{equation}
  \norm{\widetilde D_{\nabla} \bv{u}}_{U}^2 \geq h_u \gamma_u \bnorm{d_E^T \bv{u}}{U}^2, \quad \norm{\widetilde D_{\nabla} \bv{v}}_{V}^2 \geq h_v \gamma_v \bnorm{d_E^T \bv{v}}{V}^2.
\end{equation}
We can now derive a lower bound for $\mathcal{E}_{wave}$:
\begin{equation}
\begin{aligned}
  2\mathcal{E}_{wave} & = \norm{\bfu_t}_U^2 + \norm{\bfv_t}_V^2 + c_1^2 \norm{\widetilde D_{\nabla} \bv{u}}_{U}^2  + c_2^2 \norm{\widetilde D_{\nabla} \bv{v}}_{V}^2 + c_1^2 A_u + c_2^2 A_v \\
  & \geq  c_1^2 \left( \norm{\widetilde D_{\nabla} \bv{u}}_{U}^2 + A_u \right)  + c_2^2 \left( \norm{\widetilde D_{\nabla} \bv{v}}_{V}^2 + A_v \right)  \\
  & \geq  c_1^2 \left( h_u \gamma_u - \frac{h_u}{4 \tau_u } \right) \bnorm{d_E^T \bv{u}}{U}^2 + c_2^2 \left( h_v \gamma_v - \frac{h_v}{4 \tau_v}  \right) \bnorm{d_E^T \bv{v}}{V}^2 .
\end{aligned}
\end{equation}
It follows that $\mathcal{E}_{wave} \geq 0$ if
\begin{equation}
  \tau_u \geq \frac{1}{4 \gamma_u} , \quad \tau_v \geq \frac{1}{4 \gamma_v}.
 \end{equation} 
Hence, with appropriate values of the penalty parameters, the discrete energy $\mathcal{E}_{wave}$ is a semi-norm of $[\bfu, \bfv]$. The estimate \eqref{eq:ddt_E_zero_discr_wave} shows that $\mathcal{E}_{wave}$ is non-increasing in time. Because $\mathcal{E}_{wave}$ contains $\norm{\bfu_t}_U^2 + \norm{\bfv_t}_V^2$, one can show that the solution grows at most linearly in time, see e.g.\ \cite{Wang2017}. Thus, the scheme \eqref{eq:scheme_wave} is stable.
\end{proof}
\begin{remark}
Energy stability is guaranteed by Theorem \ref{thm:wave} as long as the penalty parameters satisfy $\theta_{u,v}\geq 1$. However, the choice $\theta_{u,v} = 1$ reduces the rank of the discrete spatial operator by 1, and leads to a convergence rate lower than the ideal rate \cite{Wang2017}. It is therefore important to set $\theta_{u,v} > 1$. Large values of  $\theta_{u,v}$ may improve the accuracy, but lead to a large spectral radius of the discretization matrix and hence a small time step in explicit time stepping methods \cite{MattssonHam09}. 
\end{remark}

\section{Three different discretizations of the Laplacian}\label{sec_Laplacian}
In this section we discuss the properties of the different discrete approximations of the Laplacian that have been introduced in this paper. Recall that, with Dirichlet or Neumann boundary conditions, the continuous Laplacian is symmetric and negative semidefinite in the $L^2$ inner product. We shall investigate the symmetry and definiteness of the discrete Laplacians.

To suppress unnecessary notation, we assume in this section that the PDE coefficients are continuous across the domain interface $\Gamma$ (the discussion applies to discontinuous coefficients too). That is, we consider \eqref{eq:SE_heat_2D_continuous} with $a=b=1$ (the heat equation) or $a=b=i$ (the Schrödinger equation), or the wave equation \eqref{eq:wave_2D_continuous} with $c_1=c_2=1$. 
Let $\bv{w}^T = \begin{bmatrix} \bv{u}^T & \bv{v}^T \end{bmatrix}$ and let $H$ denote the global quadrature defined by
\begin{equation}
  H = \begin{bmatrix} H_U & \\ & H_V \end{bmatrix} .
\end{equation}
The scheme \eqref{eq:scheme_schr_heat} can be written in the forms
\begin{equation}
  \bv{w}_t = D_{h} \bv{w} \, \mbox{(heat equation)} , \quad \quad \bv{w}_t = i D_{s} \bv{w} \, \mbox{(Schrödinger equation)} ,
\end{equation}
where $D_{h}$ and $D_{s}$ are different approximations of the Laplacian, including the SATs for the interface conditions. For stability, the symmetric parts of $HD_{h}$ and $iHD_{s}$  must be negative semidefinite. The penalty parameters in \eqref{eq:parameter_values} are such that $HD_{h}$ is non-symmetric but has a negative semidefinite symmetric part, while $HD_{s}$ is symmetric but (in all the cases we have investigated) indefinite. 

Similarly, the scheme \eqref{eq:scheme_wave} can be written as
\begin{equation} \label{eq:laplacian_wave}
  \bv{w}_{tt} = D_{w} \bv{w} \, \mbox{(wave equation)},
\end{equation}
where $D_w$ is the third type of approximation of the Laplacian, including the SATs for the interface conditions. 
The scheme \eqref{eq:laplacian_wave} is stable if $D_{w}$ is symmetric and negative semidefinite in the discrete inner product. The penalty parameters derived in Theorem \ref{thm:wave} ensure precisely this. Hence, $D_w$ mimics both of these properties of the continuous Laplacian. Note also that neither of the schemes
\begin{equation}
  \bv{w}_{tt} = D_{h} \bv{w} , \quad \bv{w}_{tt} = D_{s} \bv{w} ,
\end{equation}
is a stable discretization of the wave equation, but
\begin{equation}
    \bv{w}_t = D_{w} \bv{w} \quad \mbox{and} \quad \bv{w}_t = iD_{w} \bv{w}
 \end{equation} 
are stable discretizations of the heat and Schrödinger equations. So, for the heat and Schrödinger equations, we have two possible discrete Laplacians. While $D_h$ and $D_s$ are simple in the sense that they involve fewer penalty terms, one might argue that $D_w$ could be a better discretization of the Laplacian since it too is symmetric and negative semidefinite. In section \ref{sec:num_exp}, we show that it can be beneficial to use $D_w$ in place of $D_s$ when discretizing the Schrödinger equation, because it leads to smaller errors and smoother convergence behaviour. 

\section{Numerical experiments} \label{sec:num_exp}
In this section we present numerical experiments with the heat, Schrödinger, and wave equations. We use narrow-stencil diagonal-norm SBP operators \cite{MattssonNordstrom04} of interior orders $2p=4$ and $2p=6$ to approximate the spatial derivatives. We compare the new interface treatment based on OP interpolation operators with previous approaches that use only one pair of interpolation operators and hence suffer from accuracy reduction. For the non-OP schemes, we use interpolation operators developed by Mattsson and Carpenter \cite{MattssonCarpenter09}. The new and old schemes are abbreviated as OP and MC, respectively. We also let $\R^2_L$ and $\R^2_R$ denote the left and right half planes, respectively.

The reason for not including second order accurate schemes in the comparison is that MC discretizations converge with the ideal second order rate and there is nothing to gain in using OP interpolation.

\subsection{The heat equation}
We consider the heat equation 
\begin{equation} \label{heat_eqn}
\begin{array}{rcl}
u_t - \lambda_1 \Delta u = 0, & (x,y) \in \R_L^2, & t>0 , \\
v_t - \lambda_2 \Delta v = 0, & (x,y) \in \R_R^2, & t>0 , \\
u - v = 0, & x = 0 , & t > 0 , \\
\lambda_1 u_x - \lambda_2 v_x = 0, & x = 0 , & t > 0 ,
\end{array}
\end{equation}
where the diffusion coefficients $\lambda_1$ and $\lambda_2$ are constant.
\begin{figure}
\centering
\includegraphics[trim={0.9cm 2.5cm 1.2cm 3.5cm}, clip, width=0.8 \textwidth]{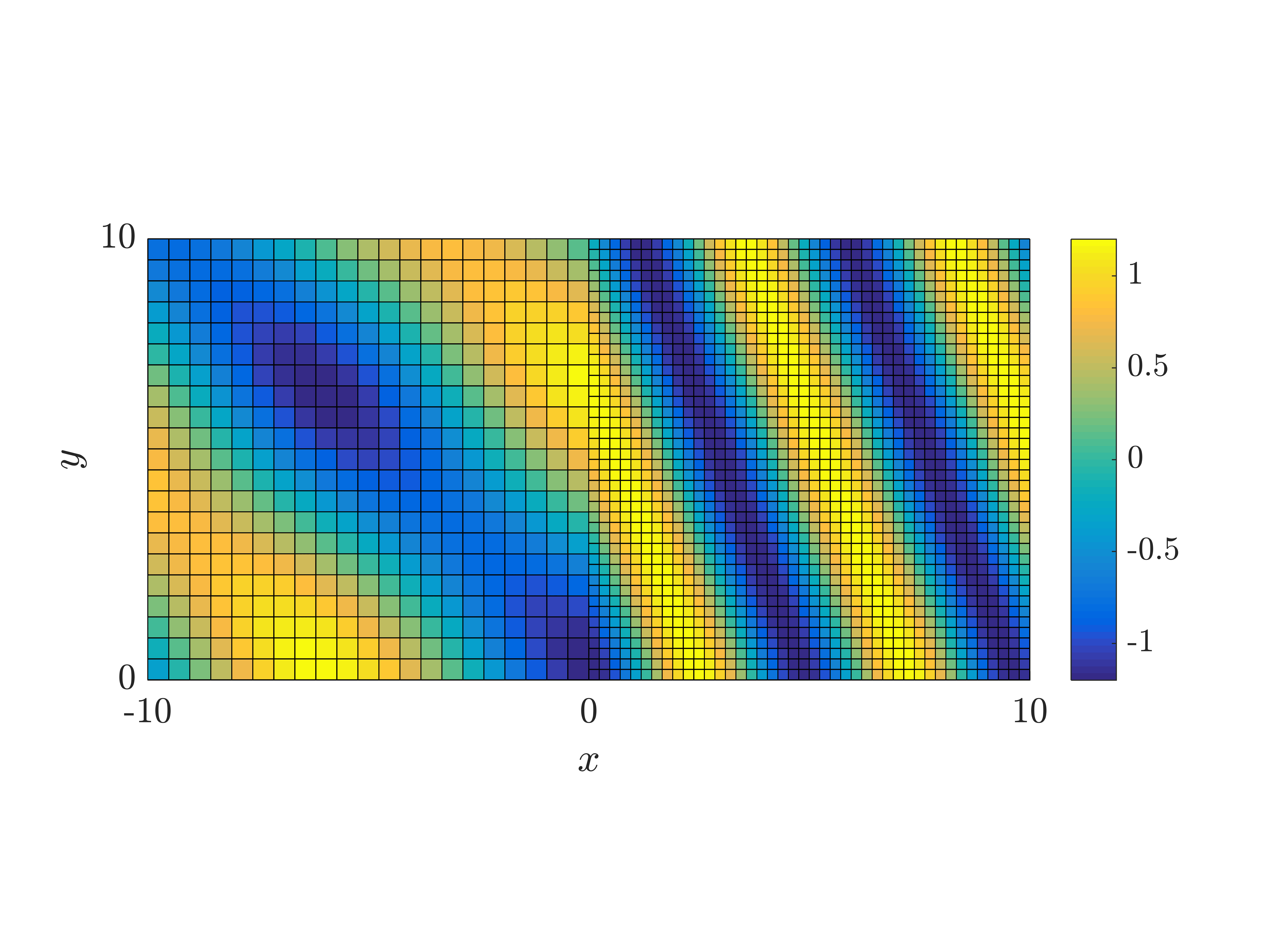}
\caption{The exact solution of the heat equation at time $t=0$, plotted on a grid of $21\times 21$ points in the left block and $41\times 41$ points in the right block.}
\label{heat_solution}
\end{figure}
The equation admits analytical solutions in the form
\begin{equation} 
\begin{aligned}
u &= \cos(k_1x+k_2y)e^{-\omega t} +  \gamma\cos(k_1x-k_2y)e^{-\omega t}, \\
v &= (1+\gamma)\cos(kx+k_2y)e^{-\omega t},
\end{aligned}
\label{heat_analytical_solution}
\end{equation}
where $\omega=\lambda_1(k_1^2+k_2^2)$, $k=\sqrt{\omega/\lambda_2-k_2^2}$ and $\gamma=(\lambda_1 k_1-\lambda_2 k)/(\lambda_1 k_1+\lambda_2 k)$. 
We choose the diffusion coefficients $\lambda_1=0.1$ and $\lambda_2=0.025$ and set $k_1=k_2=0.5$. The exact solution corresponding to these parameter values at the initial time $t=0$ is plotted in Figure \ref{heat_solution}. In the computation we restrict the domain to $[-10, 10] \times [0, 10]$, impose Dirichlet boundary conditions at all outer boundaries, and use the exact solution to obtain initial and boundary data. The Dirichlet boundary conditions are imposed weakly by the SAT method \cite{Berg2012heat}.

Because the diffusion coefficient ratio is $\lambda_1/\lambda_2=4$, the spatial frequency in the right half plane is twice as large as that in the left half plane. To resolve this solution efficiently, we use Cartesian grids with grid sizes $h_u$ and $h_v=0.5h_u$ in the left and right blocks, respectively. This results in a non-conforming interface at $x=0$ with mesh refinement ratio 1:2, as shown in Figure \ref{heat_solution}.  Equation \eqref{heat_eqn} is discretized in space by the scheme \eqref{eq:scheme_schr_heat}, and is integrated in time by the 4th order backward differentiation formula with a time step $\Delta t=0.25h_v$. We set the final time $T=2$, at which point the exact solution has the same shape as the initial solution, with the maximum amplitude diffused from 1.20 to 1.09.

\begin{figure}
\centering
\includegraphics[width=0.7\textwidth]{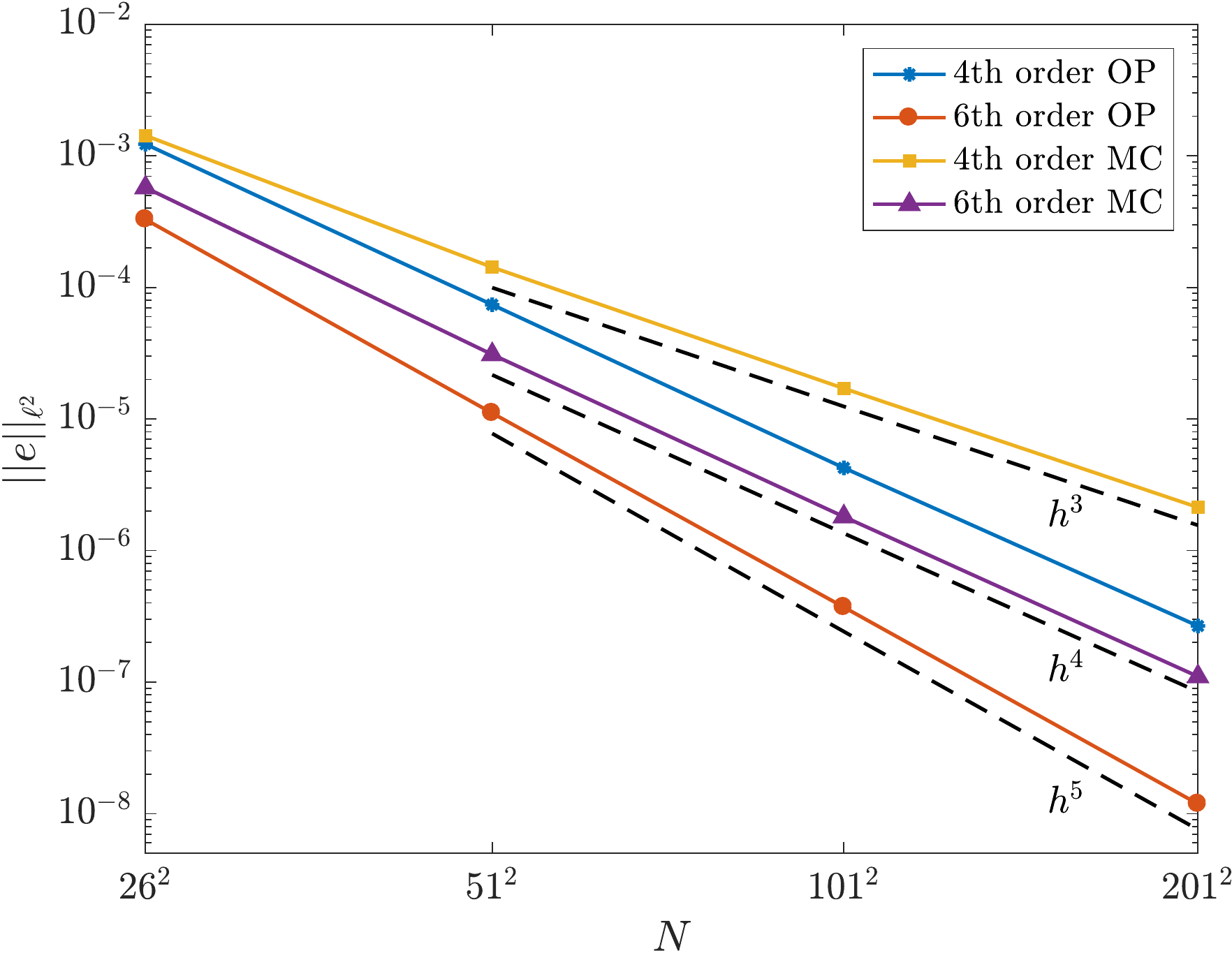}
\caption{Error plot for the heat equation. OP vs.\ MC interpolation operators. $N$ denotes the number of grid points in the coarse block.}
\label{heat_convergence}
\end{figure}

In Figure \ref{heat_convergence} we plot the $\ell^2$-errors of the solution at time $t=2$. We observe that with the MC interpolation operators, the convergence rate is $p+1$, and with the OP interpolation operators, the rate is $p+2$, where $p = 2,3$.

We have also performed the above experiment with the symmetric, negative semidefinite spatial discretization in \eqref{eq:scheme_wave}, and obtained similar $\ell^2$-errors and convergence rates. This suggests that the slightly simpler scheme \eqref{eq:scheme_schr_heat} works well for the heat equation, if solution accuracy is the primary concern. However, the symmetric discrete operator is self-adjoint just like the continuous spatial operator, and hence \eqref{eq:scheme_wave} is a dual-consistent scheme, while \eqref{eq:scheme_schr_heat} is not \cite{Eriksson2017}.

\subsection{The Schr\"{o}dinger equation}
Consider the time dependent Schr\"{o}dinger equation with a potential step,
\begin{equation} \label{eq:SE_num_exp_setup}
\begin{array}{lcl}
u_t = i \Delta u  , & (x,y) \in \R^2_{L} , & t>0 , \\
v_t = i \Delta v + iV_0 v , & (x,y) \in \R^2_R , & t>0 , \\
u = v , & x = 0 , & t > 0 , \\
u_x = v_x , & x = 0 , & t > 0 ,
\end{array}
\end{equation}
where $V_0$ is constant. The equation \eqref{eq:SE_num_exp_setup} admits exact solutions of the form
\begin{equation} \label{eq:analytical_Schrodinger}
  \begin{aligned} 
  u(x,y,t) &= A e^{i(k_1 x + k_2 y - \omega t)} + B e^{i(-k_1 x + k_2 y - \omega t)},  \\
  v(x,y,t) &= C e^{i\left( \widetilde{k}_1 x + k_2 y - \omega t \right)},
   \end{aligned}
\end{equation}
where
\begin{equation}
  \omega = k_1^2+k_2^2 , \quad \widetilde{k}_1 = \sqrt{V_0 + k_1^2} , \quad B = A \frac{k_1-\widetilde{k}_1}{k_1+\widetilde{k}_1} , \quad C = A + B .
\end{equation}
We set $A=1$, $V_0=3\pi^2$ and $k_1=k_2=\pi$, which yields $\omega = 2 \pi^2$, $\widetilde{k}_1 = 2\pi$, $B=-\frac{1}{3}$ and $C=\frac{2}{3}$. In the computations we restrict the spatial domain to $[-1,1] \times [0,1]$. We impose Dirichlet boundary conditions and use the exact solution as initial and boundary data. Because the solution has a larger wavenumber for $x>0$ we use two blocks with a 2:1 grid size ratio as depicted in Figure \ref{fig:schrodinger_initial}. We use the SBP in time method \cite{NORDSTROM2013} with an operator based on the Gauss quadrature rule with 4 points \cite{Boom_15} to advance the solution to the final time $T=0.5$. The time step is chosen as $\Delta t = 0.1h_{v}$. Numerical experiments indicate that this time step is small enough that the spatial errors dominate. We use 4th and 6th order spatial discretizations and compare MC with OP interpolation operators.

When using the indefinite discretization of the Laplacian, both MC and OP exhibit erratic convergence rates in the 6th order case, see Figures \ref{fig:schrodinger_mc_def_vs_indef} and \ref{fig:schrodinger_op_def_vs_indef}. Similar behavior was observed in \cite{Nissen_15}. When using the semidefinite discrete Laplacian instead, the convergence is smoother and the errors are smaller, in particular for 6th order. Hence, we propose to always use the semidefinite Laplacian for the Schrödinger equation, even though the indefinite Laplacian also is stable. 

Note that when switching to the semidefinite Laplacian, we changed not only the interface coupling but also the SATs that impose the Dirichlet boundary conditions on the outer boundaries. In the indefinite Laplacian we used the Dirichlet treatment in \cite{Nissen_13}, and in the semidefinite Laplacian we impose the Dirichlet conditions as in \cite{MattssonHam09}.  Replacing only the interface treatment or only the boundary treatment did not result in significantly improved convergence behavior. In the semidefinite Laplacian, we used the penalty strength 1.2 for both boundary and interface SATs.

Figure \ref{fig:schrodinger_op_vs_mc} compares OP with MC, when using the semidefinite Laplacian. As hypothesized, MC converges with rate $p+1$ while OP converges with rate $p+2$.

\begin{figure}[htbp]
\centering
\includegraphics[trim={1.2cm 2.5cm 1.2cm 3.5cm}, clip, width=0.8 \textwidth]{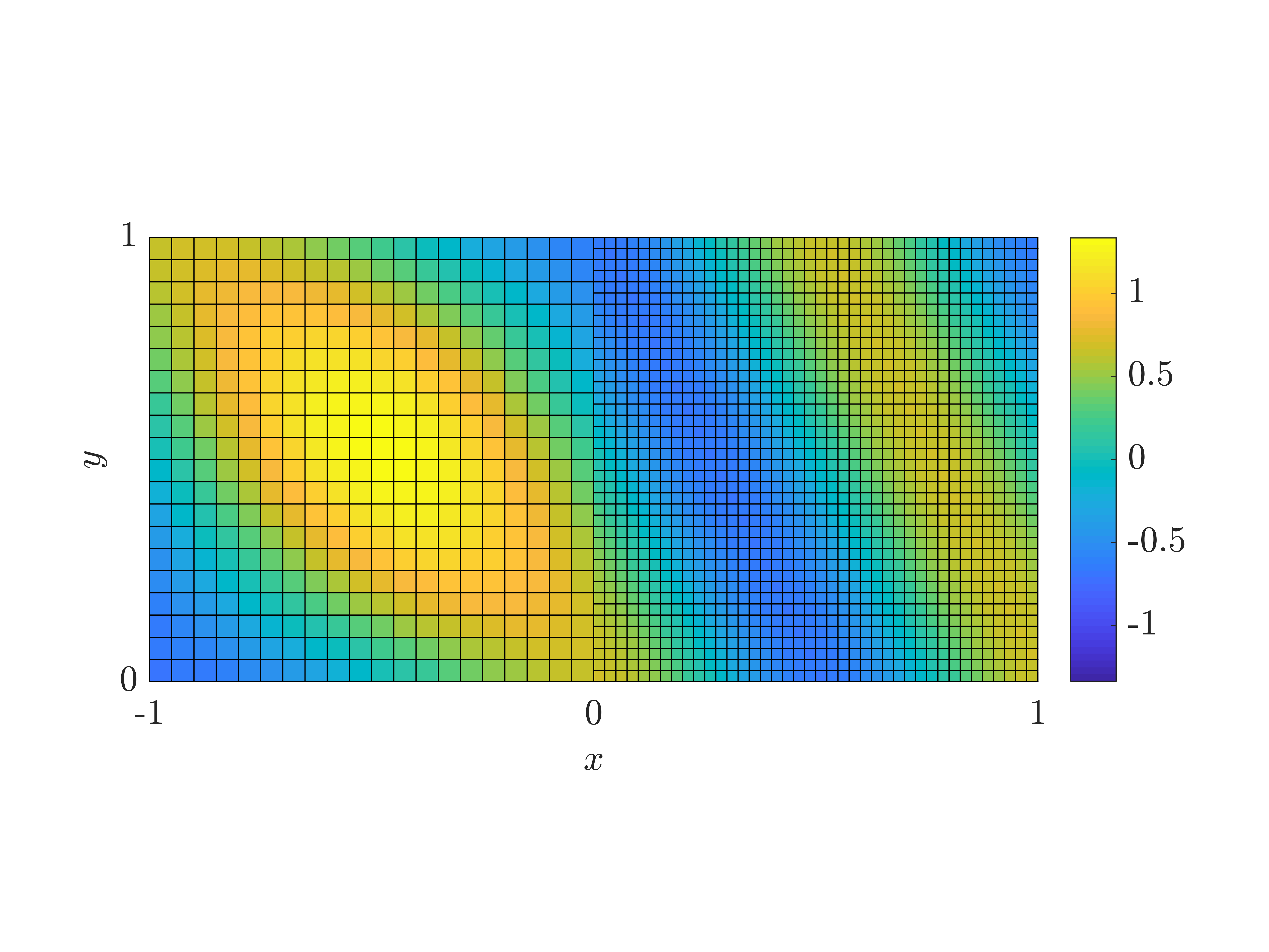}
\caption{The exact solution in the experiments with the Schrödinger equation at time $t=0$, plotted on a grid of $21\times 21$ points in the left block and $41\times41$ points in the right block.}
\label{fig:schrodinger_initial}
\end{figure}

\begin{figure}[htbp]
\centering
\includegraphics[width=0.7 \textwidth]{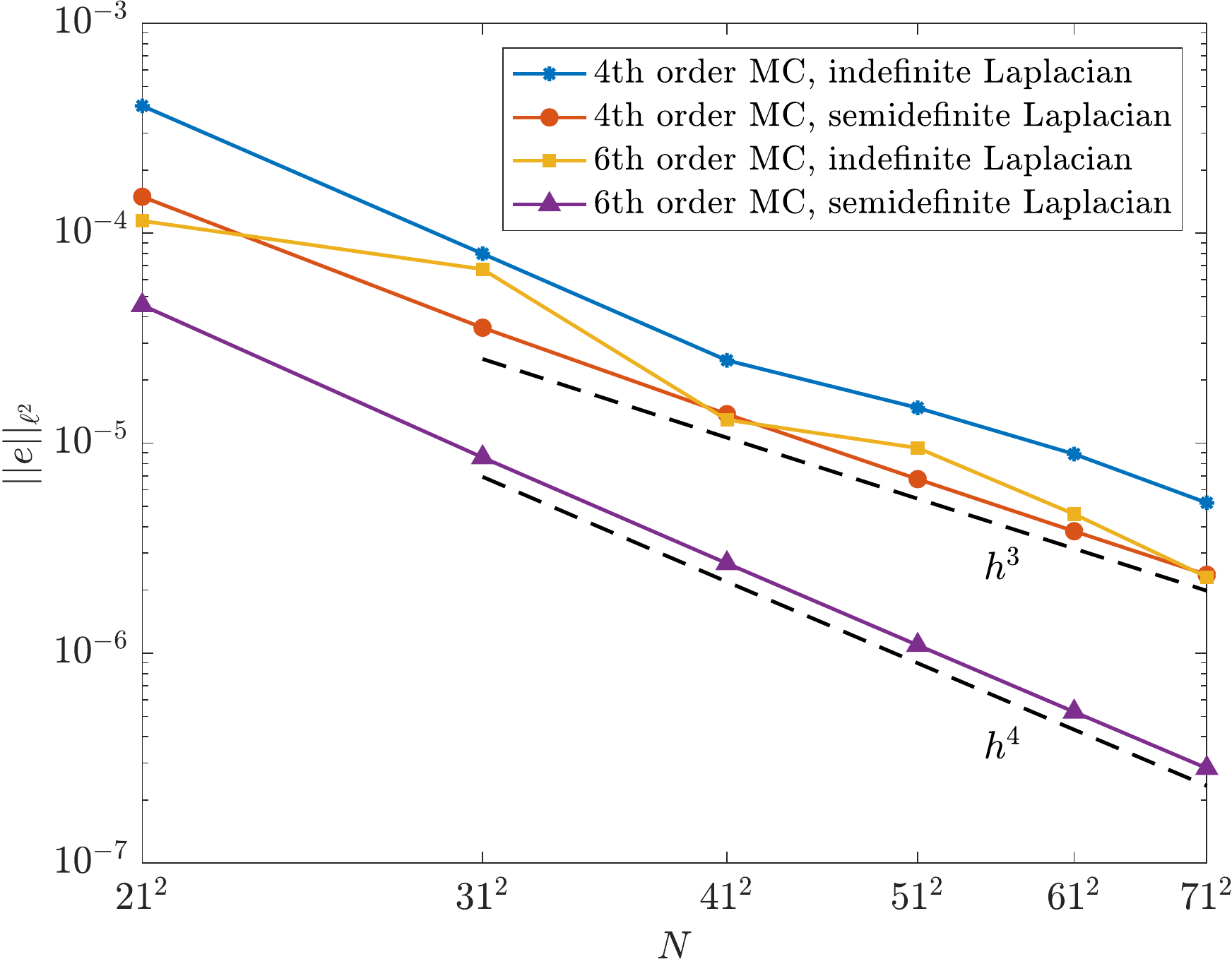}
\caption{Error plot for the Schrödinger equation using MC interpolation operators, comparing semidefinite and indefinite discretizations of the Laplacian. $N$ denotes the number of grid points in the coarse block.} 
\label{fig:schrodinger_mc_def_vs_indef}
\end{figure}

\begin{figure}[htbp]
\centering
\includegraphics[width=0.7 \textwidth]{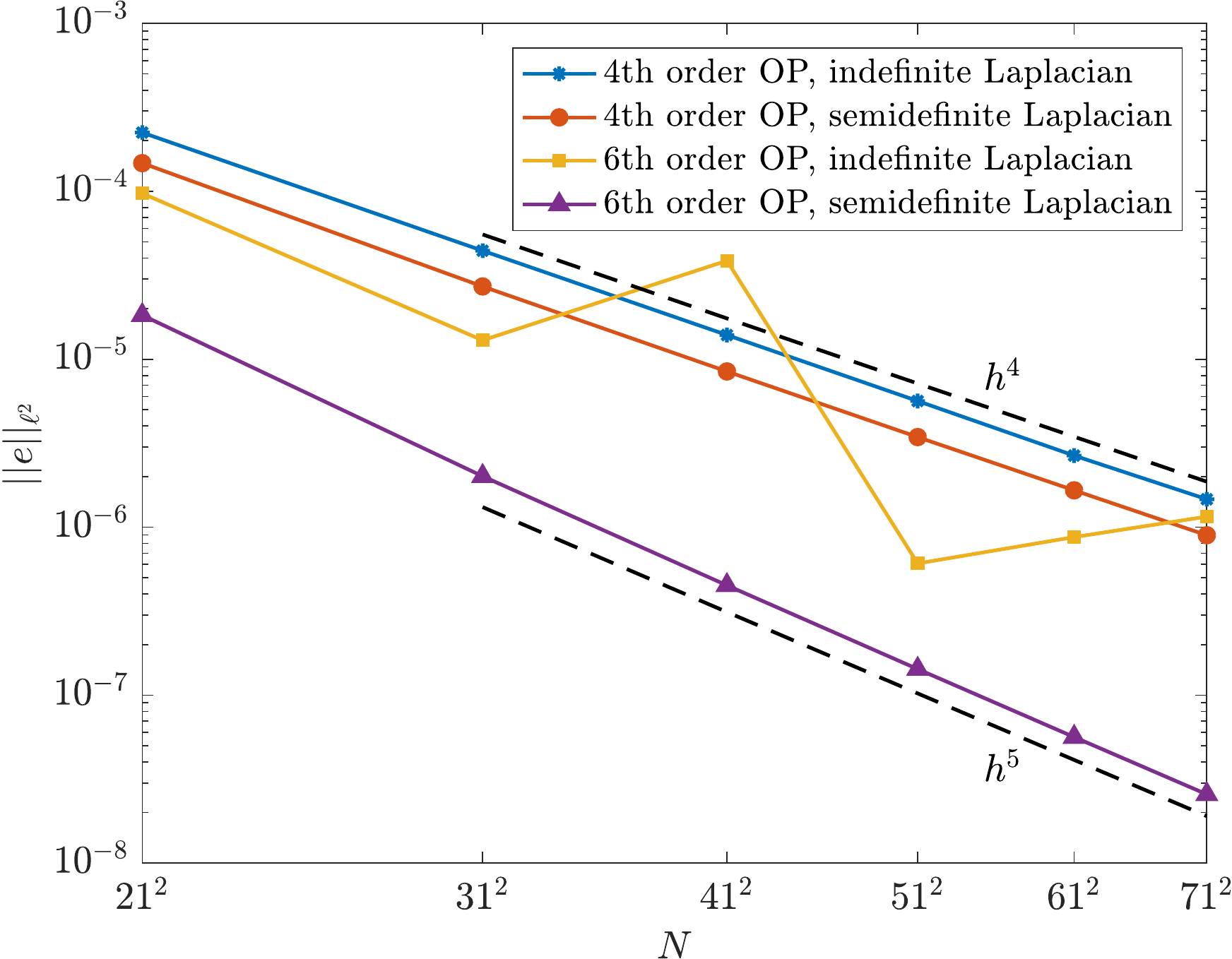}
\caption{Error plot for the Schrödinger equation using OP interpolation operators, comparing semidefinite and indefinite discretizations of the Laplacian. $N$ denotes the number of grid points in the coarse block.} 
\label{fig:schrodinger_op_def_vs_indef}
\end{figure}

\begin{figure}[htbp]
\centering
\includegraphics[width=0.7 \textwidth]{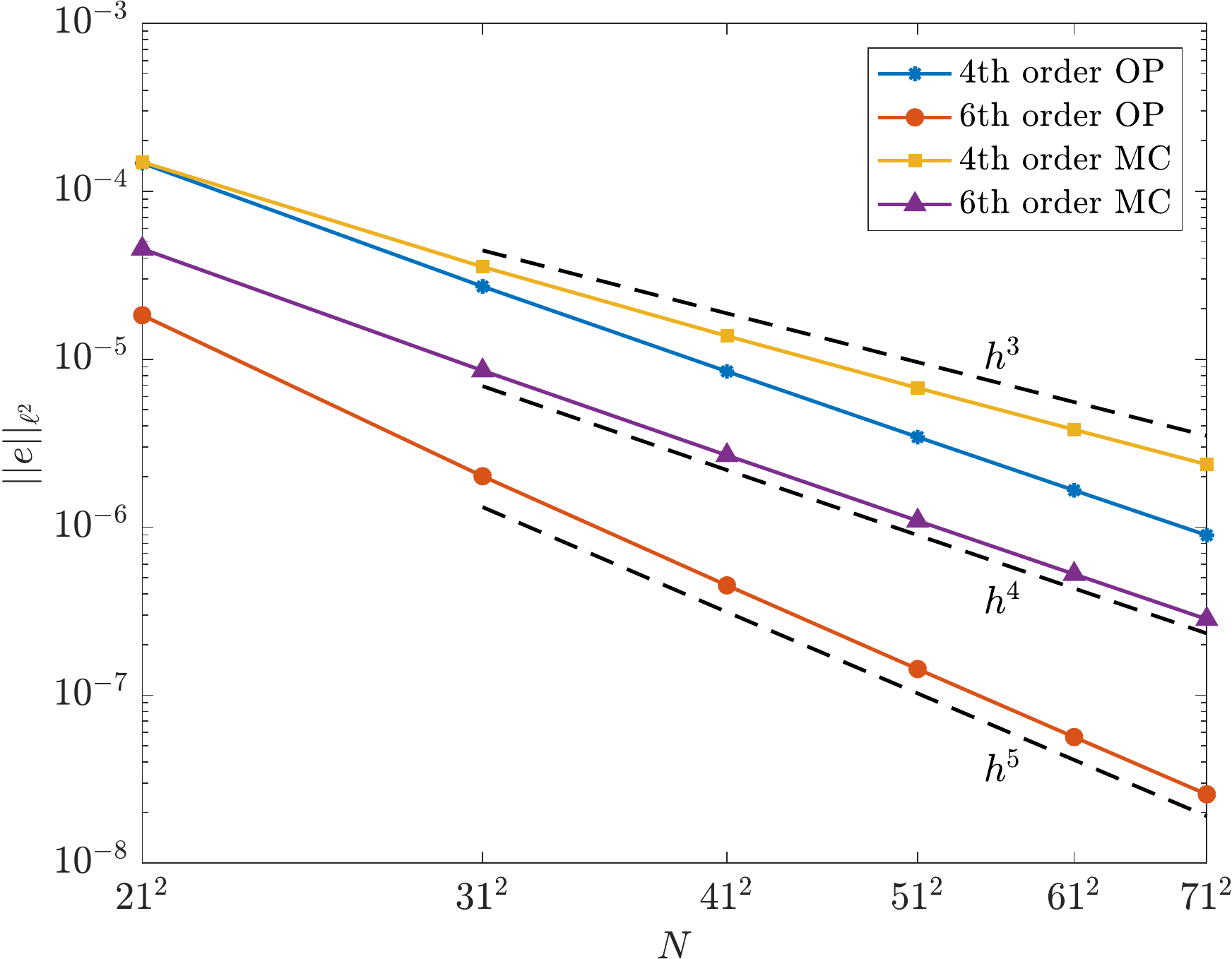}
\caption{Error plot for the Schrödinger equation. OP vs.\ MC interpolation operators, using the semidefinite discretization of the Laplacian. $N$ denotes the number of grid points in the coarse block.} 
\label{fig:schrodinger_op_vs_mc}
\end{figure}

\subsection{The wave equation}
We consider the wave equation 
\begin{equation} \label{wave_eqn}
\begin{array}{rcl}
u_{tt} - c_1^2 \Delta u = 0, & (x,y) \in \R_L^2, & t>0 , \\
v_{tt} - c_2^2 \Delta v = 0, & (x,y) \in \R_R^2, & t>0 , \\
u - v = 0, & x = 0 , & t > 0 , \\
c_1^2 u_x - c_2^2 v_x = 0, & x = 0 , & t > 0 ,
\end{array}
\end{equation}
By using Snell's law, we can derive an analytical solution in the form
\begin{equation}\label{wave_analytical}
\begin{aligned}
&u = \cos(x+y-\sqrt{2}c_1t)+k_2\cos(x-y+\sqrt{2}c_1t), \\
&v = (1+k_2)\cos(k_1 x+y+\sqrt{2}c_1t),
\end{aligned}
\end{equation}
where $k_1=\sqrt{2c_1^2/c_2^2-1}$ and $k_2=(c_1^2-c_2^2 k_1)/(c_1^2+c_2^2 k_1)$.  

In the experiment, we consider a piecewise constant wave speed by setting $c_1=1$ and $c_2=0.5$. This choice makes the wave number in the right half plane twice as large as that in the left half plane, which can be seen in the plot of the exact solution at time $t=0$ in Figure \ref{fig:wave_solution}. We restrict the domain to $[-10, 10] \times [0, 10]$, impose Dirichlet boundary conditions at all outer boundaries, and use the exact solution \eqref{wave_analytical} to obtain the initial and boundary data. To keep the number of grid points per wavelength approximately constant, we use a Cartesian mesh with mesh size $h_u$ in the left block, and $h_v=0.5h_u$ in the right block. 

\begin{figure}
\centering
\includegraphics[trim={0.9cm 2.5cm 1.2cm 3.5cm}, clip, width=0.8 \textwidth]{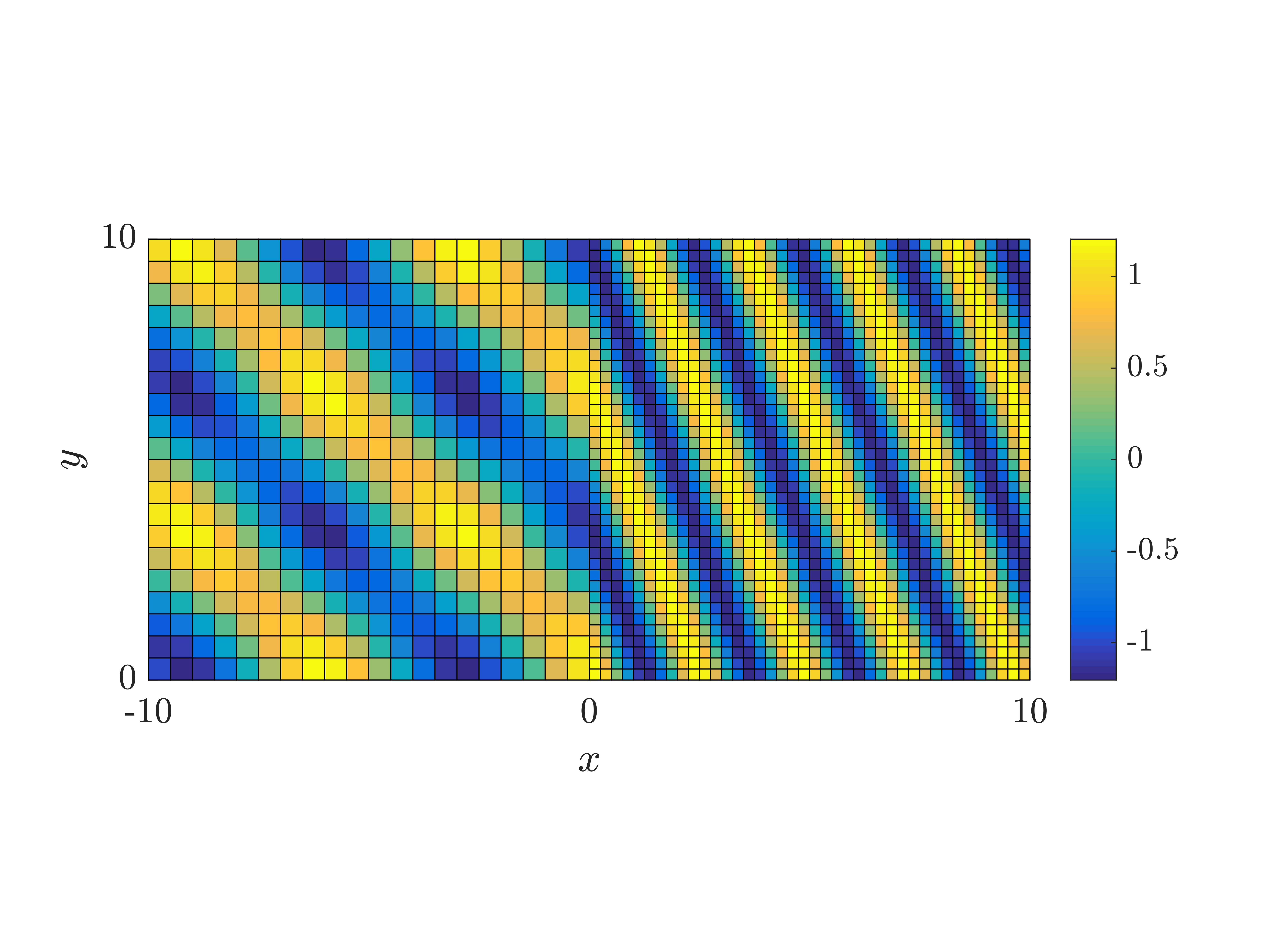}
\caption{The exact solution of the wave equation at time $t=0$, plotted on a grid of $21\times 21$ points in the left block and $41\times 41$ points in the right block.}
\label{fig:wave_solution}
\end{figure}

Equation \eqref{wave_eqn} is discretized in space by the scheme \eqref{eq:scheme_wave}, with either MC or OP interpolation operators. The Dirichlet boundary conditions are imposed weakly by the SAT method \cite{Appelo07,MattssonHam09}. We choose the values $\theta_u=\theta_v=3$ for the penalty parameters in Theorem \ref{thm:wave}, i.e.\ three times the limit value required for energy stability. In the penalty terms corresponding to the Dirichlet boundary conditions, we also set the penalty parameter to three times the limit value. We use the classical 4th order Runge--Kutta method to advance the solution to time $T=2$. The time step is chosen as $\Delta t=0.1h_v$, which is small enough that the error is dominated by the spatial discretization. In the error plot in Figure \ref{wave_convergence}, it is clear that the convergence rate for the scheme with the OP operators is $p+2$, whereas for MC it is $p+1$, where $p=2,3$.

\begin{figure}
\centering
\includegraphics[width=0.7\textwidth]{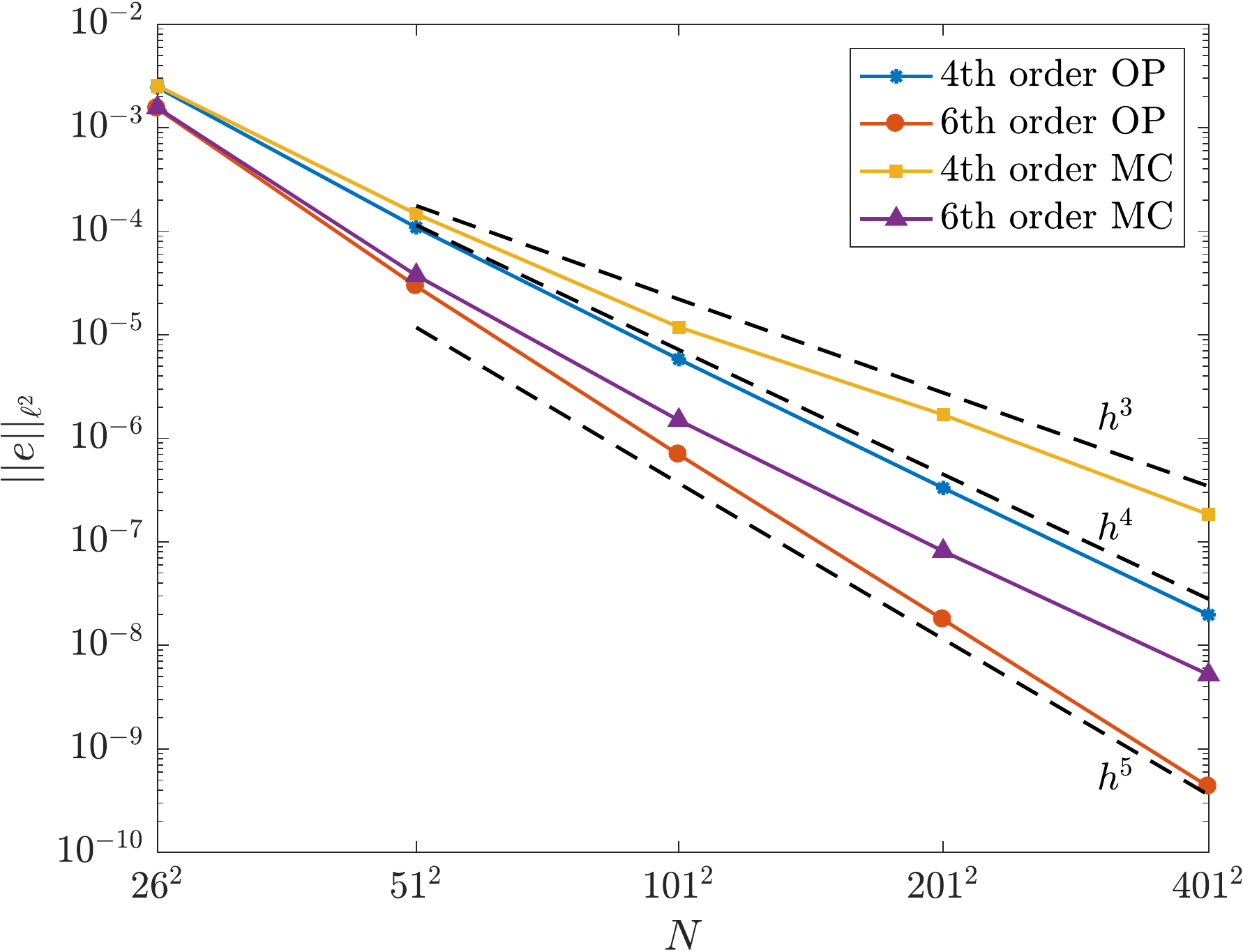}
\caption{Error plot for the wave equation. OP vs.\ MC interpolation operators. $N$ denotes the number of grid points in the coarse block.}
\label{wave_convergence}
\end{figure}

\section{Conclusion} \label{sec:conclusion}
We have studied non-conforming grid interfaces for time dependent partial differential equations with second derivatives in space. To remedy previously observed decreases in convergence rates, we have introduced order preserving (OP) interpolation operators for the non-conforming grid interfaces. The schemes based on OP operators are energy-stable and decrease the largest local truncation errors by one order compared to previous approaches. Numerical experiments demonstrate that the smaller truncation errors lead to an improvement of one order in global convergence rates.

The OP interpolation operators come in two pairs, where the two operators in a pair are the Hilbert adjoints of one another, i.e.\
\begin{equation}
  I_{v2u}^b = (I_{u2v}^g)^{\dagger} , \quad  I_{v2u}^g = (I_{u2v}^b)^{\dagger} ,
\end{equation}
where the inner products of the Hilbert spaces are defined by the quadrature rules that accompany the SBP operators. Let $q(I)$ denote the order of accuracy of the interpolation operator $I$. For traditional diagonal-norm SBP operators of interior order $2p$, it has previously been shown that $q(I) + q(I^{\dagger}) \leq 2p+1$. Theorem \ref{thm:existence} in this paper shows that given two quadrature rules of order $2p$, it is always possible to obtain $q(I) + q(I^{\dagger}) = 2p+1$. Moreover, the total order of $2p+1$ may be distributed arbitrarily between $I$ and $I^{\dagger}$. This guarantees that OP operators with the desired properties exist. 

To summarize, we propose the following schemes. For the second order wave equation, we propose the scheme \eqref{eq:scheme_wave}, with penalty parameters as in Theorem \ref{thm:wave}. For the Schrödinger equation, i.e.\ \eqref{eq:SE_heat_2D_continuous} with $a= i \alpha$ and $b = i \beta$, where $\alpha, \beta \in \R$, we propose to use the spatial operator resulting from the wave equation scheme \eqref{eq:scheme_wave}, with $c_1^2$ replaced by $i \alpha$ and $c_2^2$ replaced by $i \beta$. For the heat equation, i.e.\ \eqref{eq:SE_heat_2D_continuous} with $a,b \in \R$, one may use either the scheme \eqref{eq:scheme_schr_heat} or the spatial operator resulting from the wave equation scheme \eqref{eq:scheme_wave} with $c_1^2$ replaced by $a$ and $c_2^2$ replaced by $b$.

\section*{Acknowledgements}
M.\ Almquist gratefully acknowledges support from the Knut and Alice Wallenberg Foundation, Dnr 2016.0498.


\end{document}